\renewcommand{\Delta}{\triangle}
\definecolor{darkblue}{rgb}{0,0,0.7}
\definecolor{darkgreen}{rgb}{0.01,0.75,0.24}
\def \Ee[#1]{\mathcal{E}^{\text{{#1}}}}
\def\R{\mathbf{R}}
\def\pa[#1,#2]{\frac{\partial {#1}}{\partial {#2}} }
\def\idom[#1,#2,#3]{\int_{#1}\hspace{1pt} {#2} \hspace{1pt} \text{d}{#3}}
\def\res[#1,#2]{\left.{#1}\right|_{#2}}
\def\var[#1,#2]{\langle \delta \mathcal{E}^{\text{{#1}}}({#2}),v\rangle}
\def\vars[#1,#2,#3]{\langle \delta^2\mathcal{E}^{\text{{#1}}}({#2})v,{#3}\rangle}
\def\vard[#1,#2,#3,#4]{\langle \delta\mathcal{E}^{\text{{#1}}}({#2})-\delta\mathcal{E}^{\text{{#3}}}({#4}),v\rangle}
\def\N{\mathbb{N}}
\newcommand{\cO}{\mathcal{O}}
\newcommand{\cA}{\mathcal{A}}
\newcommand{\cB}{\mathcal{B}}
\newcommand{\cG}{\mathcal{G}}
\newcommand{\cI}{\mathcal{I}}
\newcommand{\cP}{\mathcal{P}}
\newcommand{\cX}{\mathcal{X}}
\DeclareMathOperator*{\spann}{span}
\newcommand{\be}{\begin{equation}}
\newcommand{\en}{\end{equation}}
\newcommand{\ben}{\begin{equation*}}
\newcommand{\enn}{\end{equation*}}
\newcommand{\bea}{\begin{aligned}}
\newcommand{\ena}{\end{aligned}}
\def\ba#1\ena{\begin{align}#1\end{align}}
\def\ban#1\enan{\begin{align*}#1\end{align*}}
\theoremstyle{plain}
\newtheorem{thm}{Theorem}[section]
\newtheorem{lem}[thm]{Lemma}
\newtheorem{cor}[thm]{Corollary}
\newtheorem{proposition}[thm]{Proposition}
\newtheorem{remark}[thm]{Remark}
\newtheorem{example}[thm]{Example}
\definecolor{darkred}{RGB}{139,0,0}
\definecolor{darkgreen}{RGB}{0,100,0}
\definecolor{darkmagenta}{RGB}{139,0,139}
\definecolor{darkpurple}{RGB}{110,0,180}
\definecolor{darkblue}{RGB}{40,0,200}
\definecolor{darkorange}{RGB}{255,140,0}
\numberwithin{equation}{section}
\begin{document}

\title[Box-constrained ensemble Kalman inversion]{On the incorporation of box-constraints for ensemble Kalman inversion}
\author[N. K. Chada] {Neil K. Chada}
\address{Department of Statistics and Applied Probability, National University of Singapore, 119077, Singapore}
\email{neil.chada@nus.edu.sg}

\author[C. Schillings] {Claudia Schillings}
\address{Institute of Mathematics, University of Mannheim, 68131 Mannheim, Germany}
\email{c.schillings@uni-mannheim.de}

\author[S. Weissmann] {Simon Weissmann}
\address{Institute of Mathematics, University of Mannheim, 68131 Mannheim, Germany}
\email{sweissma@mail.uni-mannheim.de}

\begin{abstract}
The Bayesian approach to inverse problems is widely used in practice to infer unknown parameters from noisy observations. In this framework, the ensemble Kalman inversion has been successfully applied for the quantification of uncertainties in various areas of applications. In recent years, a complete analysis of the method has been developed for linear inverse problems adopting an optimization viewpoint. However, many applications require the incorporation of additional constraints on the parameters, e.g. arising due to physical constraints. We propose a new variant of the ensemble Kalman inversion to include box constraints on the unknown parameters motivated by the theory of projected preconditioned gradient flows. Based on the continuous time limit of the constrained ensemble Kalman inversion, we discuss a complete convergence analysis for linear forward problems. {We adopt techniques from filtering, such as variance inflation, which are crucial in order to improve the performance and establish a correct descent.} These benefits are highlighted through a number of numerical examples on various inverse problems based on partial differential equations.
\end{abstract}

\maketitle
\bigskip
\textbf{AMS subject classifications:}  37C10, 49M15, 65M32, 65N20   \\
\textbf{Keywords}: box-constrained optimization, ensemble Kalman inversion, Bayesian inverse problems, convergence analysis
and accuracy \\

\section{Introduction}
\label{sec:intro}
Inverse problems aim to recover a quantity of interest from perturbed noisy measurements. {One very common approach to infer the unknown parameters tries to minimize the difference of the measurements and the outcome of the model in a suitable norm \cite{EHNR96}.} Another more recent approach for solving inverse problems is to take a statistical approach where the quantity of interest is a probabilistic distribution constructed via Bayes' Theorem, known as the Bayesian approach, see e.g. \cite{KS04,AMS10}. The advantage of quantifying inverse problems through the latter is that it aids in quantifying uncertainty through statistical properties. Common algorithms designed for Bayesian inversion include sampling based methods such as Markov chain Monte Carlo (MCMC) methods but also variational Bayes' methods. One recent method aimed at solving Bayesian inverse problems through data assimilation methodologies \cite{LSZ15,SR19} is ensemble Kalman inversion (EKI).

{EKI can be viewed as the application of  the ensemble Kalman filter (EnKF) \cite{GE09,GE03,KLS14,GMT11} towards inverse problems. It was first proposed by Iglesias et al. \cite{ILS13,LR09}, offering a cheaper approximation of the solution compared to traditional methods.} Since its formulation a number of research directions have been considered such as applications, building theory and applying uncertainty quantification techniques \cite{BSW18,BSWW19,CIRS17,MAI16, SS17}. However, the basic version of the EKI does not allow to incorporate additional constraints on the parameters, which often arise in many applications due to additional knowledge on the system. We will focus in the following on the efficient incorporation of box constraints. An example of this type of constrains includes the case of hierarchical EKI \cite{CIRS17} where the hyperparameters are often defined through a uniform distribution. It is well known that the EKI may lead to estimates of the unknown parameters, which are unfeasible, i.e. the estimates do not satisfy the box constraints imposed by the uniform distribution on the hierarchical parameters. As a result there is a strong motivation to study the incorporation of constraints for the EKI. 

{Introducing constrained optimization for the EnKF has been a challenge, which has drawn increasingly more attention in recent years. A literature overview of existing methods for the treatment of linear and nonlinear constraints for Kalman-based methods can be found in  \cite{ARB18,DS09}. The projection of the estimates to the feasible set is a very common approach, see e.g. \cite{KIF08,WCC09}, which can be generalized to nonlinear constraints by linearization ideas. Most of the variants are motivated by interpreting the Kalman-based updates as a solution of a corresponding optimization problem, see \cite{ ARB18} for more details. This viewpoint allows to straightforwardly include constraints on the parameters and states. In \cite{ABSS18}, the authors suggest a new approach to handle linear equality and inequality constraints for the EnKF and EKI by reparameterizing the solution of the optimization problem in the range of the covariance, i.e. by seeking the solution of the optimization problem in a subspace defined by the initial ensemble. 
In this work, we will adopt the optimization viewpoint, i.e. we will view 
the EKI as a derivative-free optimizer for the least-squares misfit functional (cp. \cite{SS17}), and motivate the incorporation of additional constraints on the unknown parameters via projection-based methods to the feasible set.} To illustrate the idea, we focus in the following on the incorporation of box-constraints, i.e. we will introduce a variant of the EKI ensuring that the estimate of the unknown parameter remains within a box of the parameter domain. Our work will build on the theory by Bertsekas \cite{DPB82,DPB15} and others \cite{SKS11,SS09} for projected preconditioned gradient methods. For inverse problems, box-constrained optimization has been applied but in a different context and for different applications, we refer the reader to \cite{HKR18}. 

 We will show that a simple projection of the EKI estimate to the feasible set will not necessarily lead to a convergent scheme. To accommodate this we propose using techniques from data assimilation such as variance inflation \cite{JLA07,JLA09,TMK16} to ensure that one can attain the correct descent direction. From this our analysis will consists of an existence and uniqueness result, a quantification of the ensemble collapse and the convergence of the residuals. We aim to compare this modified projected method to the simple projection of its continuos-time limit. {The limit leads to an ordinary differential equation with discontinuous right hand side. The presented theory is based on a smoothened version of the limit, borrowing ideas from barrier methods.}
 
In addition to the EKI based on perturbed observations, we will also consider the ensemble square root filter (ESRF) applied to inverse problems. The ESRF \cite{KM15,LDN08,TAB03} is a modification of the EnKF, but with the key difference of being deterministic as there is no inclusion of the perturbed observations. We emphasize our work will be primarily focused on the EnKF for inverse problems. We will make note of this throughout the paper whether various results can be generalized to include the ESRF. In order to justify the proposed projected EKI, we provide numerical examples demonstrating the improvements, both for the original projected EKI but also the modified projected EKI with variance inflation. \\

The main contributions of our work can be summarized as follows: 
\begin{itemize}
\item We propose a new variant of the EKI and the ESRF for inverse problems which allows to incorporate box constraints on the unknown parameters. The modification of the original EKI is motivated by viewing the algorithm as a preconditioned gradient-like flow. The scheme introduces a tailored variance inflation to ensure descent directions with respect to the misfit functional in each iteration. 
\item We derive a complete convergence analysis for the EKI with {perturbed} observations and ESRF for inverse problems based on the continuous time limit of the suggested variant in the case of a linear forward problem. 
\item The validity of the results obtained are highlighted through numerical examples. All examples will be based on partial differential equations (PDEs) with linear and nonlinear forward operators.
\end{itemize}
\subsection{Outline}
The article is structured as follows: in Section \ref{sec:bco} we provide an introduction to box-constrained optimization. We present this in a general sense before discussing its application to EKI, which we also review. Section \ref{sec:conv} is dedicated to the derivation of the projected continuous time limit and gradient flow structure. Convergence and accuracy results will be presented in the linear setting, where we also introduce the notion of variance inflation. Section \ref{sec:num} is devoted to numerical experiments on PDE-constrained inverse problems. The purpose of this section is to verify the theoretical findings on both linear and non-linear problems. Finally in Section \ref{sec:conc} we conclude with a number of remarks while providing avenues of future work.

\section{Background on EKI and box-constrained optimization}
\label{sec:bco}

The goal of computation for this work is to infer the unknown parameters $u \in \mathcal{X}$ from noisy data $y \in \R^K$ of the form
\begin{equation}
\label{eq:inv}
y = \mathcal{G}(u) + \eta, \quad \eta \sim N(0,\Gamma)\,,
\end{equation}
where $\cG: \mathcal{X} \rightarrow \R^K$ is our parameter-to-observation mapping and $K\in\N$ denotes the number of observations. {For simplicity, we will assume that the parameter space is finite dimensional, i.e. $\cX=\R^n$. Most of the results presented in the following sections can be straightforwardly generalized to the infinite-dimensional setting under suitable assumptions on the forward operator $\mathcal G$. To avoid the technicalities arising from the analysis of infinite-dimensional differential equations and optimization problems, we will work under the assumption that the parameter space is finite dimensional (possibly after applying a finite-dimensional approximation of the unknown quantity $u$).}

\subsection{Continuous time limit of the EKI}
We briefly describe the EKI algorithm below and refer to \cite{GE09,ILS13,SS17} for more details on the derivation of the method. EKI works by updating {a given ensemble of $J$ particles $\{u^{(j)}_n\}_{j=1}^J \mapsto \{u^{(j)}_{n+1}\}_{j=1}^J$} through a two-step procedure. The EKI estimation of \eqref{eq:inv} is given by the usual EnKF update formula, also known as the \textit{analysis step}
\begin{equation}
\label{eq:up}
u^{(j)}_{n+1} = u^{(j)}_n + C^{up}_n(C^{pp}_n + \Gamma)^{-1}(y^{(j)} - \mathcal{G}(u^{(j)}_n)),
\end{equation}
where we have the inclusion of perturbed observations
\begin{equation}
\label{eq:pert}
y^{(j)} = y + \eta^{(j)}.
\end{equation}
The update formula depends on empirical means and covariances defined as
\begin{align*}
\bar{u}_n = \frac{1}{J} \sum_{j=1}^{J}u^{(j)}_n, \quad \bar{\mathcal{G}}_n = \frac{1}{J} \sum_{j=1}^{J}\mathcal{G}(u^{(j)}_n), 
\end{align*}
\begin{align*}
C^{up}_n &= \frac{1}{J} \sum_{j=1}^{K}(u^{(j)}_n - \bar{u}_n)\otimes (\mathcal{G}(u^{(j)}_n) - \bar{\mathcal{G}}_n),\\ 
C^{pp}_n &= \frac{1}{J} \sum_{j=1}^{J}(\mathcal{G}(u^{(j)}_n) - \bar{\cG}_n)\otimes (\mathcal{G}(u^{(j)}_n) - \bar{\mathcal{G}}_n),
\end{align*}
{where $\otimes$ denotes the tensor product for Hilbert spaces, given by
$$
z_1 \otimes z_2: \mathcal{H}_1 \rightarrow \mathcal{H}_2, \ \textrm{with} \ q \mapsto z_1 \otimes z_2(q): = \langle z_2,q\rangle_{\mathcal{H}_2} \cdot z_2
$$
where $(\mathcal{H}_1, \langle , \cdot, \rangle_{\mathcal{H}_1})$ \& $(\mathcal{H}_2, \langle , \cdot, \rangle_{\mathcal{H}_2})$ denote the Hilbert spaces and their inner products, such that $z_1 \in \mathcal{H}_1$ and $z_2 \in \mathcal{H}_2$.} The convergence analysis will be based on the analysis of the continuous time limit for projected EKI in the linear case. This was derived for the original EKI by Schillings et al. \cite{SS17}, where the limit is given by 
{
\begin{equation}
\label{eq:limit1}
\frac{du^{(j)}}{dt} = \frac{1}{J}\sum_{k=1}^J \big\langle \mathcal{G}(u^{(k)}) - \mathcal{G}(u^{(j)}), y - \mathcal{G}(u^{(j)})\big\rangle_{\Gamma} (u^{(k)} - \bar{u})
\end{equation}
with $t\in(0,T], \ j\in \{1,\ldots,J\}$ and initial condition
\begin{equation}
\label{eq:limit1ini}
{u^{(j)}}(0) =u_0^{(j)}, \qquad j\in \{1,\ldots,J\}\,.
\end{equation}
}
{Here $\langle\cdot,\cdot\rangle_\Gamma:=\langle\cdot,\Gamma^{-1}\cdot\rangle$ denotes the weighted Euclidean inner product in $\R^K$.} Note that the limit is derived by neglecting the perturbations of the observations in each iteration, see \cite{SS17} for more details. {The analysis of the stochastic differential equation corresponding to the limit of \eqref{eq:up} will be subject to future work.}

In the case of a linear forward problem, the limit can be equivalently viewed as a preconditioned gradient flow
\begin{equation*}
\frac{du^{(j)}}{dt} =  - C(u) \nabla_u \Phi(u^{(j)};y),
\end{equation*}
with {$\| \cdot\|_\Gamma:=\|\Gamma^{-1}\cdot\|$ denoting the weighted Euclidean norm in $\R^K$,} misfit functional $\Phi(u^{(j)};y) = \frac{1}{2}\|y - \cG(u^{(j)})\|^2_{\Gamma}$ and empirical covariance $C(u)$.

\subsection{Continuous time limit of the ESRF for inverse problems}
Using a deterministic transformation of the ensemble satisfying the Kalman equations leads to the ESRF for inverse problems, see \cite{RC14}.  
Based on the results from Reich et al. \cite{BR10a,BR10b}, the continuous time limit of the ESRF for inverse problems is given by
{
\begin{align}\label{eq:limit2ini}
\frac{du^{(j)}}{dt} = \frac{1}{J}\sum_{k=1}^J \big\langle \mathcal{G}(u^{(k)}) - \mathcal{G}(u^{(j)}), y - \frac{1}{2}\mathcal{G}(u^{(j)}) - \frac{1}{2}\bar{\mathcal{G}}\big\rangle_{\Gamma} (u^{(k)} - \bar{u})
\end{align}
with $t\in(0,T], \ j\in \{1,\ldots,J\}$ and initial condition
\begin{equation*}
{u^{(j)}}(0) =u_0^{(j)}, \qquad j\in \{1,\ldots,J\}\,.
\end{equation*}
and its gradient flow structure
\begin{equation}\label{eq:limit2}
\frac{du^{(j)}}{dt} =  - C(u)\big( \nabla_u \Phi(u^{(j)};y) + \nabla_u \Phi(\bar{u};y)\big),
\end{equation}}
in the linear setting.

{Both variants of the ensemble Kalman inversion satisfy the well-known subspace property, which states that the estimate will be a linear combination of the inital ensemble members. 
\begin{lem}\cite{SS17}\label{lemma:subspace_prop}
{Let $\cA$ be the linear span of the initial ensemble $\{u_0^{(j)}\}_{j=1}^J$, i.e. {$\cA=\spann\{\{u_0^{(j)}\}_{j=1}^J\}, \ J\in\mathbb N$}, then we have that $\{u^{(j)}(t)\}_{j=1}^J \in \mathcal{A}$ for all $n\in\N$ and $t\in [0,T]$} satisfying \eqref{eq:limit1} and \eqref{eq:limit2}, respectively. 
\end{lem}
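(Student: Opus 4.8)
The plan is to exploit the structural fact that, in each of the limiting dynamics \eqref{eq:limit1} and \eqref{eq:limit2}, the velocity $\frac{du^{(j)}}{dt}$ of every particle is a \emph{scalar} linear combination of the centred ensemble vectors $\{u^{(k)}-\bar u\}_{k=1}^J$. Writing $c_{jk}(t):=\frac1J\langle \mathcal{G}(u^{(k)})-\mathcal{G}(u^{(j)}),\, y-\mathcal{G}(u^{(j)})\rangle_\Gamma$ for \eqref{eq:limit1} (and the analogous coefficient with $y-\frac12\mathcal{G}(u^{(j)})-\frac12\bar{\mathcal{G}}$ for \eqref{eq:limit2}), each right-hand side reads $\sum_{k=1}^J c_{jk}(t)\,(u^{(k)}-\bar u)$. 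Since $\bar u$ is itself an average of the $u^{(k)}$, every such combination lies in the span of the current ensemble; the content of the lemma is that this span never leaves $\mathcal{A}$.

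To make this precise I would introduce the orthogonal projection $P$ of $\R^n$ onto $\mathcal{A}$ and its complement $Q:=I-P$, and track $w^{(j)}(t):=Q\,u^{(j)}(t)$, the component of each particle transverse to $\mathcal{A}$. Applying $Q$ to the governing ODE and using linearity together with $Q\bar u=\frac1J\sum_l w^{(l)}=:\bar w$, the transverse components satisfy the \emph{linear} system $\frac{d}{dt}w^{(j)}=\sum_{k=1}^J c_{jk}(t)\,(w^{(k)}-\bar w)$. Stacking $W=(w^{(1)},\dots,w^{(J)})$ this becomes $\dot W=M(t)W$, where $M$ is assembled from the scalars $c_{jk}(t)$, which are continuous on $[0,T]$ along any solution. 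Because the initial ensemble spans $\mathcal{A}$ we have $u_0^{(j)}\in\mathcal{A}$, hence $w^{(j)}(0)=Q u_0^{(j)}=0$ and $W(0)=0$.

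The conclusion then follows from uniqueness for homogeneous linear ODEs (equivalently, a Gr\"onwall estimate on $\|W(t)\|$): the zero function solves $\dot W=M(t)W$ with $W(0)=0$, so $W\equiv0$ on $[0,T]$, i.e. $Q u^{(j)}(t)=0$ and therefore $u^{(j)}(t)\in\mathcal{A}$ for all $t$. Conceptually this is the statement that $\mathcal{A}^J\subset(\R^n)^J$ is an invariant linear subspace for the combined vector field, the flow being constrained to it by uniqueness of solutions. The discrete analysis step \eqref{eq:up} satisfies the same property by an immediate induction, since there too each particle is updated by a combination of the vectors $u^{(k)}_n-\bar u_n$.

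I expect the only delicate point to be the bookkeeping guaranteeing that $M(t)$ is well defined and (locally) bounded, which rests on having a solution $u^{(j)}(\cdot)$ in hand on $[0,T]$; this is supplied by the existence-and-uniqueness result for the limiting system, and in the linear-forward case the right-hand sides are polynomial in the particles, so local well-posedness is immediate from the Picard--Lindel\"of theorem. I would also remark that the argument uses nothing about linearity of $\mathcal{G}$---only that the coefficients $c_{jk}$ are scalars---so the subspace property holds for nonlinear forward maps as well, and verbatim for both \eqref{eq:limit1} and \eqref{eq:limit2}.
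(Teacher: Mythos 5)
Your proof is correct, and it is essentially the standard argument: the paper itself offers no proof of this lemma (it is quoted from \cite{SS17}), and the argument there rests on exactly the observation you make, namely that the right-hand sides of \eqref{eq:limit1} and \eqref{eq:limit2} are scalar combinations of $u^{(k)}-\bar u$, so that the vector field is tangent to $\mathcal{A}$ and the flow cannot leave it. Your projection-plus-Gr\"onwall formulation (tracking $Qu^{(j)}$ and invoking uniqueness for the induced linear nonautonomous system) is a clean, rigorous rendering of that invariance step, and your remarks that the argument needs no linearity of $\mathcal{G}$ and covers the discrete iteration \eqref{eq:up} by induction are also accurate.
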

}

The preconditioned gradient flow structure viewpoint opens up the perspective to include additional constraints such as box constraints or more general, nonlinear constraints. The remaining section is devoted to the introduction of the box-constrained optimization. We will focus in the following on the linear case, i.e., assuming that $\mathcal G(\cdot)=A\cdot$ with $A\in\mathcal L(\mathcal X, \R^K)$ to illustrate the basic ideas. Then, the optimization problem consists of a linear least-squares problem, i.e. $\Phi(u;y)=\frac12 \|y-Au\|_\Gamma^2$. {We will modify the differential equations \eqref{eq:limit1} and \eqref{eq:limit2}, respectively, to include the constraints using the ideas introduced below. We will see that we can interpret the suggested modifications as a variance inflation technique. In particular, this viewpoint implies that the subspace property will not hold anymore for the modified versions of the EKI.}
 
\subsection{The linear box-constrained optimization problem}
Motivated by the optimization perspective on the EKI, we consider the following optimization problem: The objective function consists of the least-squares function
  $\Phi:\cX\to\R$ with $\Phi(u) = \frac12\|Au-y\|_\Gamma^2,\ u\in\cX$, where $A:\cX\to \R^K$. We further define the set of the constraints by 
  {
\begin{equation}\label{eq:linearconstraints}
\Omega = \{u\in\cX: \langle c_j,u\rangle+ \delta_j\le 0, j=1,\dots,m\},
\end{equation}
for  $c_j\in\R^n$, $\delta_j\in\R$, $j=1,\dots,m$.  We set $h_j(u)=\langle c_j,u\rangle+ \delta_j$, i.e. the feasible set $\Omega$ is given by $\Omega = \{u\in\cX: h_j(u)\le 0, j=1,\dots,m\}$.} The constrained optimization problem is then given by
\begin{equation}\label{eq:optprob}
\min\limits_{u\in\Omega} \Phi(u).
\end{equation}

Note that the optimization problem \eqref{eq:optprob} is convex, which implies that necessary optimality conditions are also sufficient \cite{DPB15}. Let $u^*$ be a Karush Kuhn Tucker (KKT)-point for \eqref{eq:optprob}, i.e. there exists $\lambda^*\in \R^m$ such that

\begin{itemize}
\item $u^*\in\Omega,$
\item $\lambda_j^*\ge0$ for all $j\in\{1,\dots,m\}$,
\item $\lambda_j^*(\langle c_j,u^*\rangle - \delta_j) = 0$ for all $j\in\{1,\dots,m\}$,
\item $\nabla \Phi(u^*) + \sum\limits_{j=1}^m \lambda_j^*c_j = 0$,
\end{itemize}
then $u^*$ is a global minimizer of \eqref{eq:optprob}. Box constraints are a special instance of the feasible set $\Omega$, i.e. $c_j=\pm e_j$, where $e_j$ is the $j$-th unit vector, and $\delta_j$ {correspond to} the lower or upper bounds on $u_j$.

\begin{remark}
{We note that $A^\top \Gamma^{-1}A$ is always symmetric and positive semidefinite. In case that $A^\top \Gamma^{-1}A$ is strictly positive definite}, the optimization problem \eqref{eq:optprob} is strictly convex. In particular, there exists at most one stationary point $u^*$ which is the global minimum for the problem \eqref{eq:optprob} (cp. \cite{DPB15}, Proposition 2.1.1). However, please note that the assumption on the regularity of $A^\top \Gamma^{-1}A$ is in general not satisfied for inverse problems. {Typically, the number of unknown parameters is much larger than the number of observations, i.e. $n\gg K$, in the applications of interest.}
\end{remark}

{
\begin{remark}
The condition $A^\top \Gamma^{-1}A >0$ is well-known in the context of data assimilation related to as the strong observability condition. As a result this condition for the case of ensemble Kalman filter has been well-documented, see e,g, \cite{BM18,LSZ15}.
\end{remark}}

\subsubsection{{The projected gradient method for box-constraints}}

We will shortly discuss the projected gradient method to numerically solve \eqref{eq:optprob} and refer the reader e.g. to the work of Bertsekas \cite{DPB15} for a more detailed description of the method (in the discrete time setting).

Let $\cB = \{u\in\cX: a_i\le u_i \le b_i, i=1,\dots,m\},\ m\le n,$ denote a box. We can define a general projection as $\mathcal{P}_{\cB}:\R^n \rightarrow \cB$ mapping the space of the ensemble to specific box. Since we will fix the box $\cB$ for the following part, we will write $\cP$ instead of $\cP_{\cB}$. We define the projection $\mathcal{P}$ componentwise as 
\begin{align*}
(\cP(u))_i&=
\begin{cases}
a_i, & \textrm{if} \ \ u_i < a_i, \\
u_i, & \textrm{if} \ \ u_i \in [a_i,b_i] \\
b_i, & \textrm{if} \ \ u_i >b_i,
\end{cases}, \qquad i=1,\dots,m,\\
(\cP(u))_i&= u_i, \qquad i=m+1,\dots,n.
\end{align*}

The projected gradient method with step size $\alpha_k>0$ is based on the iteration 
\begin{equation}
\label{eq:pgm}
u^{k+1}(\alpha_k) = \cP(u^k-\alpha_k \nabla \Phi(u^k)).
\end{equation}

One can derive a continuous time by considering $\alpha_k$ going to zero. By using directional derivatives we obtain

\begin{equation}\label{eq:cont_projected_gradient}
\begin{split}
\left(\frac{du}{dt}\right)_i &= \begin{cases}
-\nabla_i \Phi(u),  &u_i\in(a_i,b_i),\\
-\nabla_i\Phi(u) \mathds{1}_{[0,\infty)}(-\nabla_i \Phi(u)), &u_i=a_i,\\
-\nabla_i\Phi(u) \mathds{1}_{(-\infty,0]}(-\nabla_i \Phi(u)), &u_i=b_i,\\
\end{cases}\qquad i=1,\dots,m,\\
\left(\frac{du}{dt}\right)_i &= -\nabla_i \Phi(u), \qquad i=m+1,\dots,n.
\end{split}
\end{equation}
More details can be found for the continuous time limit of the projected EKI in Section \ref{subsec:cont_time_limit}.

\begin{remark}\label{rem:smoothedsystem}
Since the right hand side (RHS) of \eqref{eq:cont_projected_gradient} is discontinuous, it is not obvious that a solution to this system exists. {To ensure unique existence we consider a smoothed version of \eqref{eq:cont_projected_gradient} by approximating the limit by ideas inspired from barrier methods. We introduce the parametrized, convex optimization problems}
\begin{equation}\label{eq:optprobsmoothed1}
\min\limits_{u\in\Omega} \Phi(u)-\frac{1}{\iota}\sum_{i=1}^{2m}log(-h_i(u)).
\end{equation}
{with parameter $\iota>0$ and inequality constraints $h_i(u)=a_i-u_i, \ i=1,\ldots,m$ and $h_{i+m}(u)=u_i-b_i, \ i=1,\ldots,m$.  As $\iota \to \infty$, the log barrier functions become closer to the indicator function of the feasible set of the original problem. We equivalently consider the problems
\begin{equation}\label{eq:optprobsmoothed2}
\min\limits_{u\in\Omega} \iota\Phi(u)-\sum_{i=1}^{2m}log(-h_i(u)).
\end{equation}
in the following, where we define $\tilde \Phi(u)=\iota \Phi(u)-\sum_{i=1}^{2m}log(-h_i(u))$.
We will approximate \eqref{eq:cont_projected_gradient} for $i\in\{1,\dots,m\}$ by
\begin{equation}\label{eq:smoothsystem_projected_gradient}
\frac{du}{dt} = -\iota \nabla \Phi+\sum_{i=1}^{2m}\frac{1}{h_i(u)}\nabla h_i(u)=-\nabla \tilde \Phi(u)\,.
\end{equation}
{For our theoretical results we will always consider the smoothed initial value problem.}
}

\end{remark}

\begin{thm}\label{thm:pgf}
Let $u_0\in\Omega$ and $u(t)$ denote the solution of the {smoothed initial value problem \eqref{eq:smoothsystem_projected_gradient}} with $u(0)=u_0$.

Further assume that $A^\top \Gamma^{-1}A$ is positive definite, and there exists a (unique global) minimizer $u^*_\iota$ of \eqref{eq:optprobsmoothed2}. Then for each $\iota>0$ it holds true that
$$ \lim\limits_{t\to\infty} u(t)=u^*_\iota,$$
i.e. the solution $u(t)$ converges to the (global) minimizer of \eqref{eq:optprobsmoothed2}.
\end{thm}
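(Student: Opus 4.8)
The plan is to read the smoothed system \eqref{eq:smoothsystem_projected_gradient} as the gradient flow $\frac{d}{dt}u=-\nabla\tilde\Phi(u)$ of the barrier objective $\tilde\Phi(u)=\iota\Phi(u)-\sum_{i=1}^{2m}\log(-h_i(u))$ on the open feasible set $\Omega_0:=\{u\in\cX: h_i(u)<0,\ i=1,\dots,2m\}$, and then to exploit convexity to drive the flow to the minimizer. First I would record the structural facts. Since each $h_i$ is affine, a direct computation gives
\[
\nabla^2\tilde\Phi(u)=\iota\,A^\top\Gamma^{-1}A+\sum_{i=1}^{2m}\frac{1}{h_i(u)^2}\,\nabla h_i(u)\otimes\nabla h_i(u)\ \succeq\ \iota\,\lambda_{\min}(A^\top\Gamma^{-1}A)\,I,
\]
where the barrier Hessian is positive semidefinite and the quadratic part is bounded below by the hypothesis $A^\top\Gamma^{-1}A>0$. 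Hence $\tilde\Phi$ is $\mu$-strongly convex on $\Omega_0$ with $\mu:=\iota\,\lambda_{\min}(A^\top\Gamma^{-1}A)>0$; in particular the assumed minimizer $u^*_\iota$ is an interior point satisfying the stationarity condition $\nabla\tilde\Phi(u^*_\iota)=0$.

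The second step is global existence together with confinement to the interior, which is where the barrier does its work. The field $-\nabla\tilde\Phi$ is smooth on the open set $\Omega_0$, so Picard--Lindel\"of yields a unique maximal solution from the (strictly feasible) initial point $u_0\in\Omega_0$. Along it, $\frac{d}{dt}\tilde\Phi(u(t))=-\|\nabla\tilde\Phi(u(t))\|^2\le0$, so $\tilde\Phi(u(t))\le\tilde\Phi(u_0)$ for all $t$ in the existence interval. I would then show that the sublevel set $S=\{u\in\Omega_0:\tilde\Phi(u)\le\tilde\Phi(u_0)\}$ is a compact subset of $\Omega_0$: it is bounded because $\tilde\Phi$ is coercive (the injectivity of $A$ forces $\Phi\to\infty$ as $\|u\|\to\infty$, while the barrier terms remain bounded below on the slab $a_i<u_i<b_i$), and it stays away from $\partial\Omega$ because $-\log(-h_i(u))\to+\infty$ whenever some $h_i\to0^-$. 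Since the trajectory never leaves the compact set $S\subset\Omega_0$, it can neither reach the singular boundary nor blow up in finite time, so the solution exists for all $t\ge0$.

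Finally I would prove convergence using the Lyapunov function $V(t)=\tfrac12\|u(t)-u^*_\iota\|^2$. Differentiating along the flow and invoking $\nabla\tilde\Phi(u^*_\iota)=0$ together with $\mu$-strong convexity gives
\[
\dot V(t)=-\langle u(t)-u^*_\iota,\nabla\tilde\Phi(u(t))\rangle\le-\mu\|u(t)-u^*_\iota\|^2=-2\mu V(t),
\]
so Gr\"onwall's inequality yields $V(t)\le V(0)e^{-2\mu t}\to0$, i.e. $u(t)\to u^*_\iota$ at an exponential rate. Alternatively, since $\tilde\Phi$ is a strict Lyapunov function whose only critical point in $S$ is $u^*_\iota$, LaSalle's invariance principle gives the same conclusion.

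The Hessian computation and the concluding Gr\"onwall estimate are routine; the genuine obstacle is the second step, namely certifying that the trajectory remains inside $\Omega_0$ for all time rather than running into the boundary where the vector field is singular. This is precisely the purpose of the logarithmic barrier, and the argument hinges on combining the monotone decrease of $\tilde\Phi$ along the flow with its blow-up at $\partial\Omega$ and its coercivity. Note that the positive-definiteness of $A^\top\Gamma^{-1}A$ enters twice and is indispensable: once to guarantee coercivity (hence compactness of the sublevel sets), and once to supply the strong-convexity constant $\mu$ that produces the contraction.
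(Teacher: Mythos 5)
Your proposal is correct, and its core coincides with the paper's own argument: the paper also takes the Lyapunov function $V(u)=\tfrac12\|u-u^*_\iota\|^2$, computes $\frac{dV}{dt}=\langle -\nabla\tilde\Phi(u),u-u^*_\iota\rangle<0$ from (strict) convexity, and concludes. What you do differently is make that sketch airtight in two places where the paper is silent. First, the paper's inference ``$\dot V<0$, thus the claim follows'' is, as stated, incomplete: strict negativity of $\dot V$ alone does not force $V\to 0$ (the derivative could decay too fast), so one needs either LaSalle's invariance principle or a quantitative rate. Your Hessian computation $\nabla^2\tilde\Phi=\iota A^\top\Gamma^{-1}A+\sum_i h_i(u)^{-2}\,\nabla h_i\otimes\nabla h_i\succeq \mu I$ supplies exactly that rate: $\dot V\le -2\mu V$ and Gr\"onwall give exponential convergence, which is strictly stronger than the paper's conclusion. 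Second, you address global-in-time existence and confinement of the trajectory to the open feasible set via monotone decrease of $\tilde\Phi$, blow-up of the barrier at $\partial\Omega$, and coercivity from $A^\top\Gamma^{-1}A>0$; the paper implicitly assumes the flow is globally defined and stays where $\tilde\Phi$ is smooth. So this is the same route, executed as a rigorous completion rather than an alternative; the only cost is the extra hypothesis-checking (coercivity, interior stationarity of $u^*_\iota$), all of which you verify correctly for affine $h_i$ and box constraints.
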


\begin{proof}
{
We define $V(u) = \frac12 \|u-u^*_\iota\|^2$ and prove that $V$ is a strict Lyapunov-function by the strict convexity of the optimization problem. The flow of $V$ satisfies

\begin{equation*}
\frac{dV(u)}{dt}  = \langle \frac{du}{dt},u-u^*_\iota\rangle =\langle -\nabla\tilde \Phi(u),u-u^*_\iota\rangle<0\,,
\end{equation*}
thus, the claim follows.
}
\end{proof}
\begin{remark}
By duality arguments (see \cite[11.2]{BV04} for more details), the accuracy of the approximation can be bounded by
\[
\Phi(u^*_\iota)-\Phi(u^*) \le \frac{2m}{\iota}\,,
\]
where $u^*$ denotes the minimizer of the original problem \eqref{eq:optprob}. In particular, $\Phi(u^*_\iota)\to \Phi(u^*) $ for $\iota\to \infty$ and thus $u^*_\iota\to u^*$.
\end{remark}

\begin{cor}\label{cor:pgf}
Let {$u_0\in\Omega$} and $u(t)$ denote the solution of the {smoothed initial value problem \eqref{eq:smoothsystem_projected_gradient}} with $u(0)=u_0$.

Further assume that there exists a (global) minimizer of  \eqref{eq:optprobsmoothed2}. Then it holds true that
{$$ \lim\limits_{t\to\infty} \Phi(u(t))=\Phi(u^*_\iota),$$}
where $u^*_{\iota}$ is a KKT-point of \eqref{eq:optprobsmoothed2}.
\end{cor}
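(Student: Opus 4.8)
The plan is to read \eqref{eq:smoothsystem_projected_gradient} as the gradient flow of the convex potential $\tilde\Phi$, establish convergence of the \emph{value} $\tilde\Phi(u(t))$ to its minimum by an energy (Lyapunov) estimate, and then transfer this to convergence of $\Phi(u(t))$ using the special structure of the quadratic misfit. Unlike Theorem~\ref{thm:pgf}, I cannot exploit a unique minimizer, so the argument must be value-based rather than point-based.

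First I would record the dissipation identity: along $\frac{du}{dt}=-\nabla\tilde\Phi(u)$ one has $\frac{d}{dt}\tilde\Phi(u(t))=-\|\nabla\tilde\Phi(u(t))\|^2\le 0$, so $\tilde\Phi(u(t))$ is non-increasing. Since the barrier terms force $\tilde\Phi(u)\to+\infty$ as $u$ approaches the boundary of the box, the sublevel set $\{\tilde\Phi\le\tilde\Phi(u_0)\}$ is a compact subset of the interior; hence the trajectory stays away from the boundary, $\nabla\tilde\Phi$ is locally Lipschitz there, and the flow exists globally and remains bounded. Writing $m^*:=\tilde\Phi(u^*_\iota)=\min\tilde\Phi$ and $V(t)=\frac12\|u(t)-u^*_\iota\|^2$, convexity of $\tilde\Phi$ yields
$$
\frac{dV}{dt}=-\langle\nabla\tilde\Phi(u),u-u^*_\iota\rangle\le -\big(\tilde\Phi(u(t))-m^*\big)\le 0 .
$$
Integrating over $[0,\infty)$ gives $\int_0^\infty(\tilde\Phi(u(t))-m^*)\,dt\le V(0)<\infty$. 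As $\tilde\Phi(u(t))-m^*\ge 0$ is non-increasing and integrable, it must tend to $0$, i.e. $\tilde\Phi(u(t))\to m^*$.

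The remaining and genuinely new step is to pass from $\tilde\Phi$ to $\Phi$ without the strict convexity that previously pinned down a single limit. Here I would show that $\Phi(u)=\frac12\|Au-y\|_\Gamma^2$ is \emph{constant} on the convex set $\mathcal M$ of minimizers of $\tilde\Phi$: on any segment joining two minimizers, $\Phi$ and the barrier are both convex while their weighted sum $\tilde\Phi$ is constant, which forces each summand to be affine along the segment; for the convex quadratic $\Phi$ this means the connecting direction lies in the kernel of $A$, so $Au$ and therefore $\Phi$ agree at the two endpoints. Thus $\Phi\equiv\Phi(u^*_\iota)$ on $\mathcal M$. Since $u(t)$ is bounded, any sequence $t_k\to\infty$ admits a subsequence with $u(t_{k_j})\to\bar u$ inside the open box; continuity gives $\tilde\Phi(\bar u)=m^*$, so $\bar u\in\mathcal M$ and $\Phi(\bar u)=\Phi(u^*_\iota)$. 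A standard subsequence argument then yields $\lim_{t\to\infty}\Phi(u(t))=\Phi(u^*_\iota)$.

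I expect the main obstacle to be exactly this last transfer. Dropping positive definiteness of $A^\top\Gamma^{-1}A$ destroys uniqueness of the minimizer and the clean Lyapunov collapse of Theorem~\ref{thm:pgf}, so one cannot simply invoke continuity of $\Phi$ at a single limit point. The device that rescues the statement is the observation that $\Phi$ takes a single value on the entire minimizer set of $\tilde\Phi$, which is special to the quadratic misfit together with the affine box constraints; carefully verifying this, and checking that all subsequential limits remain in the open box rather than escaping onto the barrier, is where the care is needed.
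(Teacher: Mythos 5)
Your proof is correct in substance but takes a genuinely different route from the paper's. The paper's own proof is a two-line argument in the observation space: it writes the induced flow of $Au(t)$, asserts that this corresponds to the gradient flow of a strictly convex problem in the observation variable, and invokes the Lyapunov argument of Theorem \ref{thm:pgf} there, so that $Au(t)$ converges and hence $\Phi(u(t))\to\Phi(u^*_\iota)$. You instead stay entirely in parameter space: the dissipation identity for $\tilde\Phi$, the convexity estimate $\frac{dV}{dt}\le-\bigl(\tilde\Phi(u(t))-m^*\bigr)$, integrability plus monotonicity giving $\tilde\Phi(u(t))\to m^*$, the structural lemma that $\Phi$ is constant on the minimizer set $\mathcal{M}$ of $\tilde\Phi$ (constancy of $\tilde\Phi$ along a segment forces each convex summand to be affine, hence the connecting direction lies in $\ker A$), and a subsequence argument. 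Your version is longer but more careful where the paper is loose: the observation-space dynamics is not literally a flow in $w=Au$ alone, since the barrier term $\sum_i h_i(u)^{-1}A\nabla h_i(u)$ depends on $u$ and not only on $Au$, and strict convexity in $w$ only pins down values, not a gradient-flow structure, when $A$ is not surjective. Moreover, your lemma that $\Phi$ takes a single value on all of $\mathcal{M}$ is exactly what makes the statement well-posed for an \emph{arbitrary} KKT point of \eqref{eq:optprobsmoothed2}, a point the paper passes over silently.

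One slip you should repair: the sublevel set $\{\tilde\Phi\le\tilde\Phi(u_0)\}$ need \emph{not} be compact. The barrier only confines the constrained coordinates $i\le m$; the free coordinates $i>m$, and in particular any direction of $\ker A$ supported on them, are not controlled, so without positive definiteness of $A^\top\Gamma^{-1}A$ (which this corollary deliberately does not assume) sublevel sets can be unbounded. This does not break your argument, because both consequences you draw from compactness are available by other means that you already establish: boundedness of the trajectory follows from your own monotonicity $V(t)\le V(0)$, and a uniform distance to the boundary of the box follows from monotonicity of $\tilde\Phi(u(t))$ together with the fact that each barrier term satisfies $-\log(-h_i(u))\ge-\log(b_i-a_i)$, so a bounded sum forces every individual term to stay bounded above and hence $-h_i(u(t))\ge\delta>0$. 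With these two observations substituted for the compactness claim, global existence, the limit $\tilde\Phi(u(t))\to m^*$, and the subsequence step all go through as you wrote them.
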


\begin{proof}
Let $u^*_\iota$ be an arbitrary KKT-point of \eqref{eq:optprobsmoothed2}. The flow in the observation space is given by
{\[
\frac{dAu}{dt} = -\iota A\nabla \Phi+\sum_{i=1}^{2m}\frac{1}{h_i(u)}A\nabla h_i(u)\,,
\]}
which corresponds to the gradient flow of a strictly convex optimization problem in the observation space. Thus, by the same arguments as before in Theorem \ref{thm:pgf}, the claim follows.
\end{proof}

\subsubsection{{The preconditioned projected gradient method}}
We consider the preconditioned version of the iteration \eqref{eq:pgm} {in discrete time} given by
\begin{equation}
\label{eq:ppgm}
 u^{k+1}(\alpha) = \cP(u^k-\alpha D_k\nabla \Phi(u^k)),
 \end{equation}
where $D_k$ is a symmetric, positive definite matrix. It is well known that arbitrary choice of $D_k$ gives no descent for any choice of $\alpha>0$. {We will briefly discuss an example demonstrating that the preconditioning of the gradient flow does not lead to a descent direction in general (cp. \cite{DPB82}). This is highlighted in Figure \ref{fig:example}.}

\begin{figure}[h!]
\centering
 \includegraphics[scale=0.35]{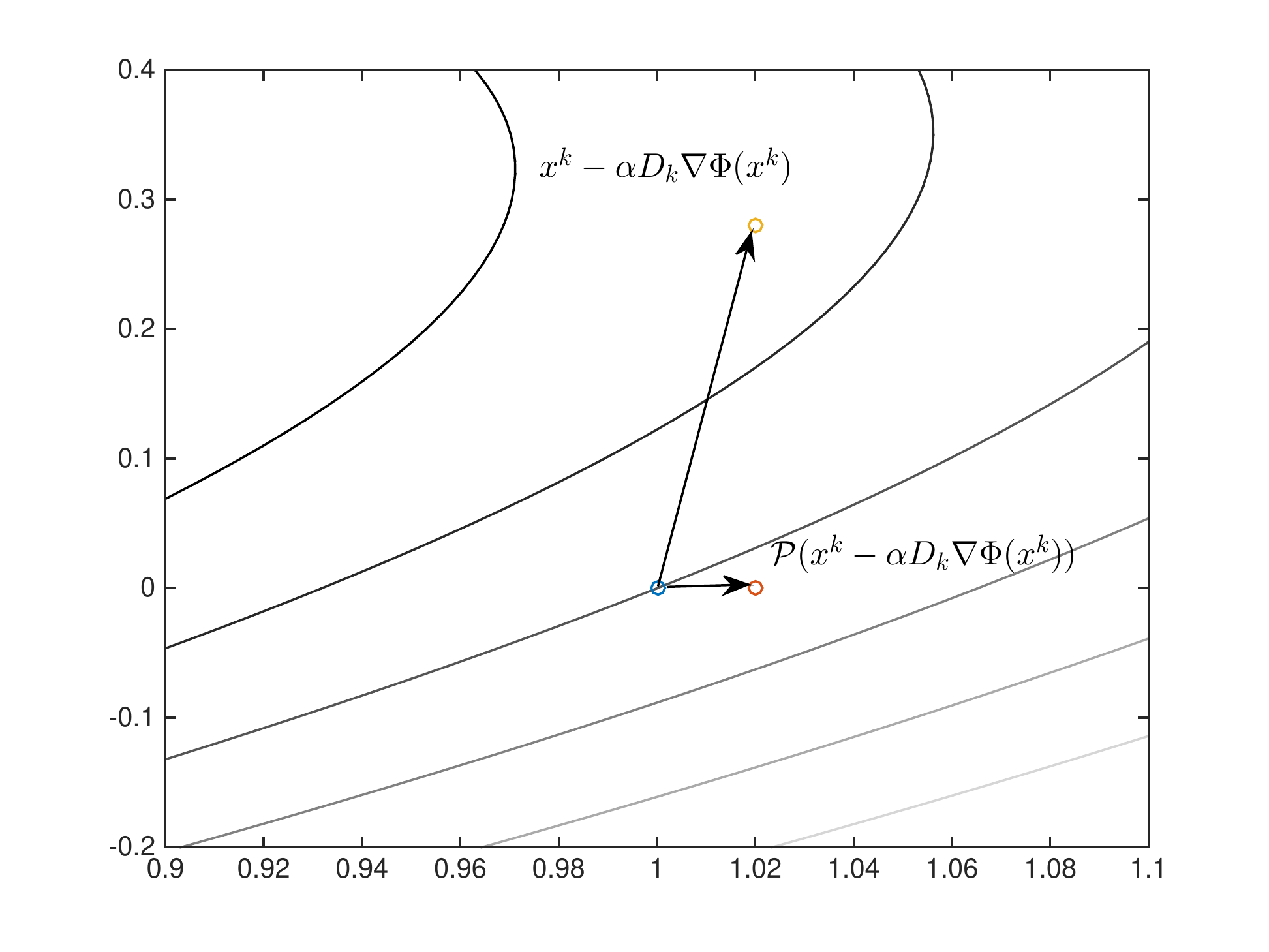}
\caption{Varying contour lines of the function $\Phi(x)$ defined in Example \ref{ex:contr}, with both the preconditioned descent direction in the unconstrained case and the projected preconditioned descent direction.}
 \label{fig:example}
\end{figure}

\begin{example}
\label{ex:contr}
We consider the 2-dimensional quadratic example: We define the quadratic function $$\Phi(x) = x^{\top} Qx+x_1 = (x_1-x_2)^2+2x_2^2+x_1, \quad \text{with}\quad Q = \begin{pmatrix} \phantom{-}1 & -1\\ -1 & \phantom{-}3 \end{pmatrix},$$
and consider the minimization problem of $F$ with the constraints $x_1\in\R,\ x_2\le0$. The gradient of $\Phi$ is given by $$\nabla \Phi(x) = \begin{pmatrix} 2x_1-2x_2+1\\  -2x_1+6x_2 \end{pmatrix}.$$

The current iterate is given by $x^k=\begin{pmatrix} 1\\ 0\end{pmatrix}$. 
 As a preconditioner, we consider the symmetric, positive definite symmetric matrix $$ D_k = \begin{pmatrix} 1 &2 \\ 2 &10\end{pmatrix}\,.$$ For $\alpha>0$, the next iteration is given by
$$ x^{k+1} = \cP(x^k - \alpha D_k\nabla \Phi(x^k))
= \begin{pmatrix} 1+\alpha\\ 0\end{pmatrix},$$
where $\cP$ is the projection onto $\R\times\R_{\le0}$. Then, $$\Phi(x^{k+1}) = (1+\alpha)^2+1+\alpha > 2 = \Phi(x^k)\,,$$ i.e. for all $\alpha>0$ the function value of the objective function increases.  
\end{example}

{
Example \ref{ex:contr} shows that a simple projection strategy for the EKI, which is a preconditioned gradient flow in the linear case, does not lead to a convergent descent method in general. 
 }

{To ensure a descent direction for the preconditioned projected gradient method, we follow the approach of \cite[Proposition 1]{DPB82}, which suggests to use matrices $D_k$ diagonal with respect to the subset of indices containing
\begin{equation}
\label{eq:indexbertsekas}
\mathcal{I}^+(u^k) = \bigg\{i\in\{1,\dots,m\}\mid u^k_i =a_i, \frac{\partial \Phi(u^k)}{\partial u^i} >0 \vee u^k_i = b_i, \frac{\partial \Phi(u^k)}{\partial u^i} <0\bigg\}.
\end{equation}
We will give more details on the modification of the preconditioner in the context of the EKI in the following. In particular, we will make use of data assimilation methods such as variance inflation to ensure a descent. This will be discussed in greater detail in Section \ref{sec:conv}.
}

\section{Convergence analysis}
\label{sec:conv}
{In this section we introduce a variant of the projected EKI and derive the continuous limit of the algorithm. We will provide a complete convergence analysis of the proposed modification in the linear setting. This will include an accuracy result of the proposed algorithm, the analysis of the ensemble collapse and the convergence to the truth.}

Recall that the componentwise projection onto the box $\cB$ is denoted by $\cP$. To incorporate the projection into our scheme we modify the procedure described in Section \ref{sec:bco}. We now define our \textit{prediction step} in its projected form as
\begin{equation*}
u_{n,\cP}^{(j)}=\cP(u_n^{(j)}),\quad \bar{u}_{\mathcal{P}} = \frac{1}{J} \sum_{j=1}^{J}(u^{(j)}_{n,{\mathcal{P}}}), \quad \bar{\mathcal{G}}_{\cP} = \frac{1}{J} \sum_{j=1}^{J}\mathcal{G}(u^{(j)}_{n,{\mathcal{P}}}), 
\end{equation*}
and similarly for the covariances 
\begin{align*}
C^{up}_{n,\cP} &= \frac{1}{J} \sum_{j=1}^{J}(u^{(j)}_{n,\cP} - \bar{u}_{\cP})\otimes (\mathcal{G}(u^{(j)}_{n,\cP}) - \bar{\mathcal{G}}_{\cP}), \\
C^{pp}_{n,\cP} &= \frac{1}{J} \sum_{j=1}^{J}(\mathcal{G}(u^{(j)}_{n,\cP}) - \bar{u}_{\cP})\otimes (\mathcal{G}(u^{(j)}_{n,\cP}) - \bar{\mathcal{G}}_{\cP}).
\end{align*}

{
Then by using these estimates we can construct our update formula in its closed form which is
\begin{equation}
\label{eq:up_proj}
\begin{cases}
\tilde u^{(j)}_{n+1,\mathcal{P}} = u^{(j)}_{n,\mathcal{P}} + C^{up}_{n,\mathcal{P}}(C^{pp}_{n,\mathcal{P}} + h^{-1} \Gamma)^{-1}(y - \mathcal{G}(u^{(j)}_{n.\mathcal{P}})),\\
u^{(j)}_{n+1,\mathcal{P}} = \cP(\tilde u^{(j)}_{n+1,\mathcal{P}}).
\end{cases}
\end{equation}}

\subsection{Continuous time limit}\label{subsec:cont_time_limit}

{We now derive the continuous time limit for the projected EnKF for inverse problems. We begin be recalling the equations in closed form, given by the component-wise increments}

\begin{align*}
(u_{n+1}^{(j)})_i-(u_n^{(j)})_i &=\cP\Big([\cP(\tilde u_n^{(j)})]_i+\Big[C_{n,\cP}^{up}(C_{n,\cP}^{pp}+\frac1h\Gamma)^{-1}(y-\cG(\cP(u_{n}^{(j)}))\Big]_i\Big) \\
&-\cP\Big([\cP(\tilde u_{n}^{(j)})]_i\Big).
\end{align*}
{By using the Neumann expansion for part of the Kalman gain, we observe for positive-semidefinite {$C\in \mathbb R^{K\times K}$} that
\begin{align}
\bigg(\frac1h\Gamma+C\bigg)^{-1} 
\label{eq:series}
&=h\Gamma^{-1}+\sum\limits_{k=1}^\infty h^{k+1}(\Gamma^{-1}C)^k\Gamma^{-1}.
\end{align}
By defining $v_i:=\bigg[C_{n,\cP}^{up}\Gamma^{-1}(y-\cG(\cP(u_{n}^{(j)}))\bigg]_i$, using \eqref{eq:series}
and by using the definition of directional derivatives, we obtain}

\begin{equation}
\label{eq:dd}
\begin{split}
\lim\limits_{h\to0} \frac{(u_{n+1}^{(j)})_i-(u_n^{(j)})_i}h &= 
\begin{cases}
v_i, 	&(\tilde u_n^{(j)})_i\in(a_i,b_i),\\
\mathds{1}_{[0,\infty)}(v_i)v_i, 	&(\tilde u_n^{(j)})_i\le a_i,\\
\mathds{1}_{(-\infty,0]}(v_i)v_i, 	&(\tilde u_n^{(j)})_i\ge b_i,
\end{cases}\quad i=1,\dots,m\\
\lim\limits_{h\to0} \frac{(u_{n+1}^{(j)})_i-(u_n^{(j)})_i}h &=	v_i(u_t^{(j)}), \quad i=m+1,\dots,n.	
\end{split}
\end{equation}
\\
Finally, we obtain the continuous time limit for the EnKF by

\begin{equation}\label{eq:projection_cont_time}
\begin{split}
\left(\frac{du^{(j)}}{dt}\right)_i &= \begin{cases} v_i(u_t^{(j)}), & (u_t^{(j)})_i\in(a_i,b_i),\\
																\mathds{1}_{[0,\infty)}(v_i(u_t^{(j)}))v_i(u_t^{(j)}), & (u_t^{(j)})_i = a_i,\\
																\mathds{1}_{(-\infty,0]}(v_i(u_t^{(j)}))v_i(u_t^{(j)}), & (u_t^{(j)})_i = b_i,
																\end{cases}\quad i=1,\dots,m\\
\left(\frac{du^{(j)}}{dt}\right)_i &=	v_i(u_t^{(j)}) \quad i=m+1,\dots,n,		
\end{split}				
\end{equation}
where $v_i(u^{(j)})$ is given by $$v_i(u^{(j)}) = \left[C^{up}(u)\Gamma^{-1}(y- \cG(u^{(j)}))\right]_i.$$

In the linear case we can write $v(u^{(j)})$ as

\begin{equation}\label{eq:precond}
v(u^{(j)}) = -C(u)\nabla \Phi(u^{(j)}),
\end{equation}
with $\Phi(u) =\frac12 \| Au-y\|_{\Gamma}^2$ from the minimization problem \eqref{eq:optprob}.

{
\begin{remark}
{Due to the projection onto the feasible set, the RHS of \eqref{eq:projection_cont_time} is discontinuous and {for our theoretical analysis} we will consider the smoothed system }

{
\begin{equation}\label{eq:smoothedsystem_projected_EKI}
\frac{du^{(j)}}{dt} = -\iota C(u)\nabla \Phi(u^{(j)})+\sum_{i=1}^{2m}\frac{1}{h_i(u)}\nabla h_i(u),
\end{equation}
for $j=1,\dots,J$. 
}
\end{remark}
}

\subsection{Variance inflation}

The EnKF and the ESRF are known to have certain difficulties in a high dimensional setting. One of the cases is where the dimension of the parameter space is considerably larger than the dimension of the ensemble. As a result this can undervalue the importance of the covariance matrices. One common way to alleviate this issue is through variance inflation \cite{JLA07,JLA09,TMK16}. 

The idea is to inflate the empirical covariance by another covariance matrix which is of the prior. {This is usually specified through additive variance inflation defined below as
\begin{equation}
\label{eq:add}
C(u) \rightarrow \vartheta C_0 + C(u),
\end{equation}}
where $C_0$ is our prior covariance and $\vartheta>0$ is a constant. If we relate this to EKI, this was first applied in \cite{SS17}. By incorporating this form of inflation our gradient flow structure is modified to
\begin{equation}
\label{eq:inf_limit}
\frac{du^{(j)}}{dt} =  - (\vartheta C_0 + C(u)) \nabla_u \Phi(u^{(j)};y).
\end{equation}
We note if one was to apply it for the projection the limit would differ in that the $u$ would be projection under $\mathcal{P}$. 

\subsection{Convergence analysis in the linear case}
\label{sec:grad_flow}
In the linear case we view the continuous time limit of projected EnKF as preconditioned gradient flow.

\subsubsection{Transformed method for the EnKF}
In Example \ref{ex:contr} we have seen that in the case of preconditioned projected gradient methods, it is not possible to ensure a descent direction for an arbitrary choice of a positive definite matrix $D_k$. Based on the results of Bertsekas \cite{DPB82}, we will transform \eqref{eq:projection_cont_time} in a way such that the preconditioner is diagonal with respect to an index set $\cI^+(u)$ which is built similar to the set from \eqref{eq:indexbertsekas}. Since we consider a system of particles we will use a preconditioner which is diagonal with respect to an index set which depends on the whole ensemble of particles $u = (u^{(j)})_{j=1,\dots,J}$, in particular we set 

$$\cI^+(u):= \bigg\{i\in\{1,\dots,m\}| \bar{u_i}=a_i, \frac{\partial \Phi(\bar u)}{\partial x^i}>0 \quad \vee \quad \bar{u_i} = b_i, \frac{\partial \Phi(\bar u)}{\partial x^i}<0\bigg\}.$$

Similarly, we could also choose the index set to be 
\begin{align*}\hat\cI^+(u):= \underset{j\in\{1,\dots,J\}}{\cup}\bigg\{i\in\{1,\dots,m\} \mid &u_i^{(j)}=a_i, \frac{\partial \Phi(u^{(j)})}{\partial x^i}>0 \quad \vee \\ \quad &u_i^{(j)} = b_i, \frac{\partial \Phi(u^{(j)})}{\partial x^i}<0\bigg\}.
\end{align*}

In this work we will focus on the choice of $\cI^+(u)$. Therefore, we consider the preconditioned gradient flow given by 

\begin{equation}\label{eq:transformedEnKF}
\begin{split}
\left(\frac{du^{(j)}}{dt}\right)_i &= \begin{cases}
											p_i(u_t^{(j)}), &(u_t^{(j)})_i \in(a_i,b_i)\\
											\mathds{1}_{[0,\infty)}(p_i(u_t^{(j)}))(p_i(u_t^{(j)})), & (u_t^{(j)})_i=a_i\\
											\mathds{1}_{(-\infty,0]}(p_i(u_t^{(j)}))(p_i(u_t^{(j)})), & (u_t^{(j)})_i=b_i,
											\end{cases},\quad i=1,\dots,m,\\
\left(\frac{du^{(j)}}{dt}\right)_i &= p_i(u_t^{(j)}),\quad i=m+1,\dots,n,
\end{split}
\end{equation}
where $p(u_t^{(j)}) = -D(u_t)\nabla \Phi(u_t^{(j)})$ and the preconditioner is given by 

\begin{equation}\label{eq:cont_precondtioner}
(D(u_t))_{i,j} = \begin{cases}
						(C(u_t)+\varepsilon I)_{i,j}, & i,j\in\{1,\dots,n\}\setminus \cI^+(u_t)\\
						0,	& i\in\cI^+(u_t), j\neq i \vee j\in\cI^+(u_t), j\neq i\\
						\varepsilon, 	&i=j\in\cI^+(u_t).
						\end{cases} 
\end{equation}
Let $\{x^{(j)}\}_{j=1}^{J}$ be a system of particles in $\R^n$. Without loss of generality we assume $\cI^+(x)=\{1,\dots,r\}$ for $r\le m$. Hence, the the preconditioner $D(x)$ can be written as
\begin{equation*}
\begin{pmatrix}
\varepsilon I_r & 0\\
0 & \hat C(x)+\varepsilon I_{n-r}
\end{pmatrix} = 
\begin{pmatrix}
0 & 0\\
0 & \hat C(x)
\end{pmatrix} + 
\varepsilon I_n,
\end{equation*}
where $(\hat C(x))_{i,j\in\{1,\dots,n-r\}} = (C(x))_{i,j\in\{r+1,\dots,n\}}$. 

{
\begin{remark}
\label{rem:vi}
Consider the continuous time limit of the projected EnKF in equation \eqref{eq:projection_cont_time}. When we introduce variance inflation for \eqref{eq:precond} with the help of a diagonal matrix, e.g. by
\begin{equation}\label{eq:projectedEnKFVI}
v(u^{(j)}) = -(C(u)+\varepsilon I)\nabla\Phi(u^{(j)}),
\end{equation}
then the preconditioner $D(u) = C(u)+\varepsilon I$ is diagonal with respect to the index set $\cI^+(u)$ and can be written in the form of \eqref{eq:cont_precondtioner}.

{
This can be seen as follows: since $u^{(j)}$ evolves by \eqref{eq:transformedEnKF} we have that $u_i^{(j)}\in[a_i,b_i]$ for all $i$ and $j$. {For simplicity, we assume $m=n$.} Let $i\in\cI^+(u)$, then it follows 
\begin{equation*}
\bar u_i = a_i \qquad \vee\qquad \bar u_i = b_i.
\end{equation*}
Since the particles are feasible for all $t \geq 0$, we obtain 
\begin{equation*}
u_i^{(j)}=a_i \quad \forall j \qquad \vee \qquad u_i^{(j)} = b_i \quad \forall j,
\end{equation*}
and finally $u_i^{(j)}-\bar u_i = 0$ for all $j$. This leads to 
\begin{equation*}
(C(u)+\varepsilon I)_{ik} = \frac1J\sum\limits_{j=1}^J (u_i^{(j)}-\bar u_i)(u_k^{(j)}-\bar u_k) = 0,
\end{equation*}
for $i\in\cI^+(u)$, $k\in\{1,\dots,n\}$ and $i\neq k$.
}

 Hence, we can view the projected EnKF with variance inflation as special case of the presented method. 
\end{remark}
}

{
\subsubsection{Transformed method for the ESRF}

The continuous limit \eqref{eq:projection_cont_time} and the gradient flow \eqref{eq:precond} can be adapted to the case of ESRF. The same can be said for the projected preconditioned limit above. In order to do so the closed form would need to be changed such that the limit follows a similar form to \eqref{eq:limit1}.

The resulting transformed method for the ESRF is given by

\begin{equation}\label{eq:transformedESRF}
\begin{split}
\left(\frac{du^{(j)}}{dt}\right)_i &= \begin{cases}
											p_i(u_t^{(j)}), &(u_t^{(j)})_i \in(a_i,b_i),\\
											\mathds{1}_{[0,\infty)}(p_i(u_t^{(j)}))(p_i(u_t^{(j)})), & (u_t^{(j)})_i=a_i,\\
											\mathds{1}_{(-\infty,0]}(p_i(u_t^{(j)}))(p_i(u_t^{(j)})), & (u_t^{(j)})_i=b_i,
											\end{cases}\quad i=1,\dots,m,\\
\left(\frac{du^{(j)}}{dt}\right)_i &= p_i(u_t^{(j)}),\quad i=1,\dots,m,
\end{split}
\end{equation}
where now $p(u_t^{(j)}) = D(u_t)(\frac12\nabla \Phi(u_t^{(j)})+\frac12 \nabla\Phi(\bar u_t))$ and the preconditioner is again given by \eqref{eq:cont_precondtioner}.

For simplicity we will focus in the following on the transformed method for EKI \eqref{eq:transformedEnKF}. All results can be easily adapted to the transformed method \eqref{eq:transformedESRF} for the ESRF.
}

\subsubsection{{Convergence Results}}
{Let $u_{\iota}^*$ be KKT-point for \eqref{eq:optprobsmoothed2} and $(u_t^{(j)})$ be the solution of the smoothed system of \eqref{eq:transformedEnKF}
\begin{equation}
\label{eq:transformedEnKF_smooth}
\frac{du_t^{(j)}}{dt} = -\iota D(u_t)\nabla\Phi(u_t^{(j)})+\sum\limits_{i=1}^{2m}\frac1{h_i(u)}\nabla h_i(u),
\end{equation}}
where $h$ has been defined in Remark \ref{rem:smoothedsystem}. Our aim is to prove the convergence of the residual $\tilde r_t^{(j)} := u_t^{(j)} -  u_{\iota}^*$ for $t\to\infty.$ To show convergence we have to control the empirical covariance $C(u_t)$ over time. In particular, in the following proposition we will prove that we can bound the ensemble spread $e_t^{(j)} = u_t^{(j)}-\bar{u}_t$ over time.

\begin{proposition}
\label{prop:ensemblespread}

Let $u_0^{(j)}\in\Omega,$ $j=1,\dots,J$ be the initial ensemble and $u(t)$ denotes the solution for the {smoothed} flow \eqref{eq:transformedEnKF_smooth}.
Then, it holds true that $$ \frac1J\sum\limits_{j=1}^J |e_t^{(j)}|^2 \le \frac1J\sum\limits_{j=1}^J |e_0^{(j)}|^2,$$ for all $t\ge0$.
\end{proposition}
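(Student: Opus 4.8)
The plan is to prove that the ensemble spread
$$V_t := \tfrac1J\sum_{j=1}^J |e_t^{(j)}|^2, \qquad e_t^{(j)} = u_t^{(j)} - \bar u_t,$$
is nonincreasing along the smoothed flow \eqref{eq:transformedEnKF_smooth}; the bound $V_t\le V_0$ then follows at once. First I would record the regularity I need: since each barrier term $\tfrac{1}{h_i(u^{(j)})}\nabla h_i(u^{(j)})$ is repulsive and blows up as $u^{(j)}$ approaches the boundary, every particle stays in the open box for all $t\ge0$, so the right-hand side of \eqref{eq:transformedEnKF_smooth} is locally Lipschitz and $t\mapsto V_t$ is differentiable. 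Moreover $\bar u_i(t)\in(a_i,b_i)$ strictly, hence $\cI^+(u_t)=\emptyset$ and the preconditioner \eqref{eq:cont_precondtioner} reduces to $D(u_t)=C(u_t)+\varepsilon I$ throughout.

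Next I would differentiate $e_t^{(j)}$. Writing $B:=A^\top\Gamma^{-1}A$ (symmetric, positive semidefinite) and $g(u):=\sum_{i=1}^{2m}\tfrac{1}{h_i(u)}\nabla h_i(u)$, affinity of $\nabla\Phi$ gives $\nabla\Phi(u^{(j)})-\tfrac1J\sum_k\nabla\Phi(u^{(k)})=Be_t^{(j)}$, and since $D(u_t)$ is common to all particles,
$$\dot e_t^{(j)} = -\iota\,D(u_t)\,B\,e_t^{(j)} + \big(g(u_t^{(j)})-\bar g_t\big), \qquad \bar g_t := \tfrac1J\sum_{k}g(u_t^{(k)}).$$
Thus $\dot V_t=\tfrac2J\sum_j\langle e_t^{(j)},\dot e_t^{(j)}\rangle$ splits into a misfit part and a barrier part, which I bound separately. (If instead one reads the barrier in \eqref{eq:transformedEnKF_smooth} as a term common to all particles, it cancels in $\dot e_t^{(j)}$ and the barrier part is simply zero.)

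For the misfit part I must show $\sum_j\langle e^{(j)}, D(u_t)B\,e^{(j)}\rangle\ge0$. The difficulty is that $DB=(C+\varepsilon I)B$ is \emph{not} symmetric, so this is not immediate from $C,B\succeq0$. The key is the empirical-covariance identity $C(u_t)=\tfrac1J\sum_k e_t^{(k)}\otimes e_t^{(k)}$: by cyclicity of the trace,
$$\sum_j\langle e^{(j)}, C B e^{(j)}\rangle = \Tr\Big(CB\sum_j e^{(j)}\otimes e^{(j)}\Big) = J\,\Tr(CBC) = J\,\|B^{1/2}C\|_F^2\ge0,$$
while the inflation term contributes $\varepsilon\sum_j\langle e^{(j)},Be^{(j)}\rangle=\varepsilon\sum_j\langle Ae^{(j)},Ae^{(j)}\rangle_\Gamma\ge0$. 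Hence the misfit part of $\dot V_t$ is $\le0$.

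For the barrier part, $\sum_j e_t^{(j)}=0$ removes $\bar g_t$, leaving $\tfrac2J\sum_j\langle e^{(j)},g(u^{(j)})\rangle$. For box constraints $g_i(u^{(j)})=\tfrac{1}{u_i^{(j)}-a_i}+\tfrac{1}{u_i^{(j)}-b_i}$ depends only on the $i$-th coordinate and is strictly decreasing in it; using the identity $\sum_j(u_i^{(j)}-\bar u_i)g_i(u^{(j)})=\tfrac{1}{2J}\sum_{j,k}(u_i^{(j)}-u_i^{(k)})\big(g_i(u^{(j)})-g_i(u^{(k)})\big)$, every summand is $\le0$ by monotonicity, so the barrier part is $\le0$ as well. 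Combining the two bounds gives $\dot V_t\le0$, hence $V_t\le V_0$. I expect the misfit part to be the main obstacle: the non-symmetry of $D(u_t)B$ blocks a direct positivity argument, and the whole step hinges on rewriting $\sum_j\langle e^{(j)},CBe^{(j)}\rangle$ through the covariance structure as $J\|B^{1/2}C\|_F^2$; a secondary point to verify carefully is that the flow stays in the open box, so that $V_t$ is differentiable and $\cI^+(u_t)=\emptyset$.
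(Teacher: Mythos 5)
Your proof is correct and takes essentially the same route as the paper's: a Lyapunov argument for the spread, with $\dot V$ split into a misfit part, shown nonpositive via the Gram/covariance structure (which the paper compresses into ``straightforward by linearity of $\nabla\Phi$'' and spells out only later, in the proof of the residual theorem), and a barrier part, shown nonpositive via monotonicity of the barrier gradient (the paper's ``convexity of $\tilde h$''). Your write-up additionally verifies details the paper leaves implicit --- that the smoothed flow stays in the open box, hence $\cI^+(u_t)=\emptyset$ and $D(u_t)=C(u_t)+\varepsilon I$ --- but these are refinements of, not departures from, the paper's argument.
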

\begin{proof}
We consider $V(u) = \frac1J \sum\limits_{j=1}^J \frac12|u^{(j)}-\bar u|^2$ and show that $V(u)\le 0$, which implies the monotonicity of the quantity $e$. We have
\begin{align*}
\frac{dV(u_t)}{dt} =& -\frac1J\sum\limits_{j=1}^J \langle u_t^{(j)}-\bar u_t, {\iota}D(u_t)A^\top \Gamma^{-1}A(u_t^{(j)} -\bar u_t)\rangle\\
&-\frac1J\sum\limits_{j=1}^J \langle u_t^{(j)}-\bar u_t, \nabla \tilde h(u_t^{(j)}) -\bar  \nabla \tilde h(u_t))\rangle,
\end{align*}
with $\tilde h(u)=\frac{1}{\iota}\sum_{i=1}^{2m}\log(-h_i(u))$. The first inner product can be straightforwardly shown to be smaller or equal than zero by exploiting the linearity of $\nabla \Phi$. The convexity of $\tilde h$ yields   
\[
-\frac1J\sum\limits_{j=1}^J \langle u_t^{(j)}-\bar u_t, \nabla \tilde h(u_t^{(j)}) -\bar  \nabla \tilde h(u_t))\rangle\le 0\,,
\]
which proves the monotonicity of the quantity $e$.

\end{proof}

We are now ready to prove the following theorem regarding the residual in the parameter space {{$\tilde{r}^{(j)}_t=u_t^{(j)}-u^*_\iota$}, where $u^*_\iota$ is a KKT-point for \eqref{eq:optprobsmoothed2}.}
\begin{thm}\label{thm:residuals}
{{Let $u_0^{(j)}\in\Omega$}, $j=1,\dots,J$ be the initial ensemble and {$u(t)$ denotes the solution of \eqref{eq:transformedEnKF_smooth}}, {$A^\top \Gamma^{-1}A$ be positive definite}, $\varepsilon>0$, {$\Omega\neq\emptyset$} and {$u^*_\iota$} be the KKT-point for \eqref{eq:optprobsmoothed2}. Then it holds true that {$$\lim\limits_{t\to\infty}\frac1J\sum\limits_{j=1}^J |\tilde r_t^{(j)}|^2 = \lim\limits_{t\to\infty}\frac1J\sum\limits_{j=1}^J |u_t^{(j)}-u^*_\iota|^2= 0.$$}}
\end{thm}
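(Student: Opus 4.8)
The plan is to run the Lyapunov argument of Theorem~\ref{thm:pgf} at the level of the whole ensemble, with candidate function
\[
V(u_t)=\frac1J\sum_{j=1}^J\frac12|u_t^{(j)}-u^*_\iota|^2=\frac1J\sum_{j=1}^J\frac12|\tilde r_t^{(j)}|^2 ,
\]
and to show that it decays to zero. The first move is the orthogonal decomposition $\tilde r_t^{(j)}=e_t^{(j)}+(\bar u_t-u^*_\iota)$, which yields
\[
\frac1J\sum_{j=1}^J|\tilde r_t^{(j)}|^2=\frac1J\sum_{j=1}^J|e_t^{(j)}|^2+|\bar u_t-u^*_\iota|^2 ,
\]
so it suffices to prove separately that the ensemble spread collapses and that the residual of the mean vanishes as $t\to\infty$.

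For the spread I would upgrade Proposition~\ref{prop:ensemblespread} from monotonicity to exponential decay. Differentiating $\frac1{2J}\sum_j|e_t^{(j)}|^2$ along \eqref{eq:transformedEnKF_smooth}, the linearity of $\nabla\Phi$ gives $\nabla\Phi(u^{(j)})-\nabla\Phi(\bar u)=A^\top\Gamma^{-1}A\,e^{(j)}$, and the convexity of the log-barrier contributes a nonpositive term exactly as in the proof of Proposition~\ref{prop:ensemblespread}; writing $M:=A^\top\Gamma^{-1}A$, the estimate reduces to bounding $\frac1J\sum_j\langle e^{(j)},D(u)M e^{(j)}\rangle$ from below. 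Because $D=C(u)+\varepsilon I$ is built from the very ensemble whose deviations are the $e^{(j)}$, the cross terms vanish ($\sum_j e^{(j)}=0$), one gets $\sum_j\langle e^{(j)},CM e^{(j)}\rangle=J\,\mathrm{tr}(CMC)=J\,\mathrm{tr}(MC^2)\ge0$, and the inflation yields $\varepsilon\sum_j\langle e^{(j)},M e^{(j)}\rangle\ge\varepsilon\lambda_{\min}(M)\sum_j|e^{(j)}|^2$. Since $M$ is positive definite and $\varepsilon>0$, Gr\"onwall's inequality gives $\frac1J\sum_j|e_t^{(j)}|^2\le e^{-2\iota\varepsilon\lambda_{\min}(M)t}\,\frac1J\sum_j|e_0^{(j)}|^2\to0$. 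This is precisely where variance inflation is indispensable: the term $\mathrm{tr}(CMC)$ alone cannot control $\sum_j|e^{(j)}|^2$ once $C$ becomes rank-deficient.

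For the residual of the mean I would average \eqref{eq:transformedEnKF_smooth} over $j$, using linearity once more so that $\frac1J\sum_j\nabla\Phi(u^{(j)})=\nabla\Phi(\bar u)$, and study $W(t)=\frac12|\bar u_t-u^*_\iota|^2$. Averaging produces a preconditioned smoothed gradient at $\bar u$ up to a remainder $\frac1J\sum_j\big(\nabla\tilde h(u^{(j)})-\nabla\tilde h(\bar u)\big)$ which is $O\big(\frac1J\sum_j|e^{(j)}|^2\big)$ and hence vanishing by the previous step. Writing $\nabla\Phi(\bar u)=\nabla\Phi(u^*_\iota)+M(\bar u-u^*_\iota)$ and eliminating the constant contributions through the stationarity condition $\nabla\tilde\Phi(u^*_\iota)=0$ at the KKT-point leaves a dissipative quadratic term $-\iota\langle\bar u_t-u^*_\iota,D(u_t)M(\bar u_t-u^*_\iota)\rangle$, a barrier term sign-controlled by the monotonicity of the barrier gradient, and the vanishing remainder. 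Since $D(u_t)\to\varepsilon I$ as the spread collapses, the quadratic is eventually bounded above by $-\iota\varepsilon\lambda_{\min}(M)|\bar u_t-u^*_\iota|^2$, so $\dot W\le -c\,W+\rho_t$ with $c>0$ and $\rho_t\to0$; a Gr\"onwall/Barbalat argument then forces $|\bar u_t-u^*_\iota|\to0$. Equivalently, once $D\approx\varepsilon I$ the mean obeys (asymptotically) the smoothed gradient flow of Theorem~\ref{thm:pgf}, whose limit is the KKT-point $u^*_\iota$. Combining the two limits in the decomposition proves the theorem.

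The main obstacle is the preconditioner. In contrast to Theorem~\ref{thm:pgf}, the dynamics are governed by the non-symmetric product $D(u_t)M$, whose symmetric part need not be positive semidefinite for arbitrary vectors; the argument only closes because every quadratic form in $DM$ is evaluated either on the deviations $e^{(j)}$ that generate $C$ — where $\mathrm{tr}(CMC)=\mathrm{tr}(MC^2)\ge0$ rescues positivity — or, for the mean, in the asymptotic regime $D\approx\varepsilon I$ where $DM=\varepsilon M$ is again symmetric positive definite. Controlling the coupling between the collapsing spread and the mean, that is, ruling out that the transient in which $D$ differs substantially from $\varepsilon I$ drives $\bar u_t$ away from $u^*_\iota$ (and tracking how the barrier enters the preconditioned balance relative to the misfit), is the delicate point, and is exactly why the quantitative exponential decay of the spread — rather than the mere monotonicity of Proposition~\ref{prop:ensemblespread} — is needed.
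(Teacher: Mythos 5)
Your first two steps are sound, and they constitute a genuinely different route from the paper's: the paper never separates spread from mean, but runs a single Lyapunov argument on $V(u)=\frac1{2J}\sum_j|u^{(j)}-u^*_\iota|^2$, splits the preconditioner as $D=C+\varepsilon I$, shows the $C$-part contributes a nonpositive quadratic form, and obtains strict decrease from the inflation part alone via strict convexity of the smoothed objective, so that every particle is driven to $u^*_\iota$ directly — no collapse rate and no asymptotic regime $D\approx\varepsilon I$ are needed. Your orthogonal decomposition and your exponential upgrade of Proposition \ref{prop:ensemblespread} (the identity $\sum_j\langle e^{(j)},C(u)Me^{(j)}\rangle=J\,\mathrm{tr}(C(u)MC(u))\ge0$ with $M:=A^\top\Gamma^{-1}A$, plus Gr\"onwall on the $\varepsilon$-term) are correct under the reading $D(u)=C(u)+\varepsilon I$ of Remark \ref{rem:vi}.

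The genuine gap is in your mean step, precisely at ``eliminating the constant contributions through the stationarity condition $\nabla\tilde\Phi(u^*_\iota)=0$''. In the flow \eqref{eq:transformedEnKF_smooth}, which both you and your spread computation use, the preconditioner multiplies only $\nabla\Phi$, while the barrier force enters unpreconditioned. Write $B(u)=-\sum_{i=1}^{2m}\log(-h_i(u))$, so that the KKT condition reads $\iota\nabla\Phi(u^*_\iota)=-\nabla B(u^*_\iota)$. Substituting $\nabla\Phi(\bar u_t)=\nabla\Phi(u^*_\iota)+M(\bar u_t-u^*_\iota)$ into the averaged equation, the drift of the mean becomes
\begin{equation*}
-\iota D(u_t)M(\bar u_t-u^*_\iota)\;-\;\bigl(\nabla B(\bar u_t)-\nabla B(u^*_\iota)\bigr)\;+\;\bigl(D(u_t)-I\bigr)\nabla B(u^*_\iota)\;+\;\mathcal{O}(\mathrm{spread}),
\end{equation*}
and the third term is exactly what your cancellation misses: it does not vanish as the ensemble collapses but converges to $(\varepsilon-1)\nabla B(u^*_\iota)$, a nonzero constant whenever $\nabla\Phi(u^*_\iota)\neq0$ (i.e.\ whenever the constraints genuinely matter) and $\varepsilon\neq1$. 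Hence the inequality $\dot W\le -cW+\rho_t$ with $\rho_t\to0$ is not available; what the collapsed dynamics actually stabilize is the solution of $\iota\varepsilon\nabla\Phi(u)+\nabla B(u)=0$, i.e.\ the minimizer of $\iota\varepsilon\Phi+B$, which differs from $u^*_\iota$. The paper's proof does not meet this obstruction because there the inflation acts on the whole smoothed gradient: its key term is $-\frac{\varepsilon}{J}\sum_j\langle u^{(j)}-u^*_\iota,\nabla\Phi(u^{(j)})+\nabla\tilde h(u^{(j)})\rangle$, strictly negative by strict convexity together with $\nabla\tilde\Phi(u^*_\iota)=0$. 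To close your argument you must adopt that same consistent reading of the flow, $\dot u^{(j)}=-D(u)\nabla\tilde\Phi(u^{(j)})$, under which the leftover term $(D-I)\nabla B(u^*_\iota)$ disappears; but then your spread estimate must be redone, since with a preconditioned barrier the convexity argument borrowed from Proposition \ref{prop:ensemblespread} no longer applies verbatim (an extra term $-\frac1J\sum_j\langle e^{(j)},C(u)[\nabla B(u^{(j)})-\overline{\nabla B}]\rangle$ of indefinite sign appears and has to be absorbed).
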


\begin{proof}
{By assumption, we have that $A^\top \Gamma^{-1}A$ is positive definite and $\Omega\neq\emptyset$, i.e. there exists a unique global minimizer ${u^*_\iota}$ of \eqref{eq:optprobsmoothed2}.} 

We define the function $V(u) = \frac1J\sum\limits_{j=1}^J \frac12|u^{(j)}-{u^*_\iota}|^2$ and will prove $$\frac{dV(u_t)}{dt}<0,$$ for $u_t= \{u_t^{(j)}\}_{j=1}^J$.

The variance inflation breaks the subspace property of the EKI and yields the convergence to the KKT point:

\begin{align*}
\frac1J\sum\limits_{j=1}^J \frac{d \frac12 |u_t^{(j)} - {u^*_\iota}|^2}{dt} = &\frac1J\sum\limits_{j=1}^J \langle u_t^{(j)} - {u^*_\iota}, -D(u)\nabla \Phi(u_t^{(j)})-\nabla \tilde h(u_t^{(j)})\rangle\\
=&-\frac1J\sum\limits_{j=1}^J \langle u_t^{(j)} - {u^*_\iota}, C(u)A^\top \Gamma^{-1}A(u_t^{(j)} - {u^*_\iota})\rangle\\
&-\frac1J\sum\limits_{j=1}^J\epsilon  \langle u_t^{(j)} - {u^*_\iota},\nabla \Phi(u_t^{(j)})+\nabla \tilde h(u_t^{(j)})\rangle\\
=&-\frac{1}{J^2}\sum\limits_{j,k=1}^J (u_t^{(j)} - {u^*_\iota})^\top(u_t^{(k)} - {u^*_\iota})A^\top \Gamma^{-1} A (u_t^{(k)} - {u^*_\iota})^\top(u_t^{(j)} - {u^*_\iota})\\
&-\frac1J\sum\limits_{j=1}^J\epsilon  \langle u_t^{(j)} - {u^*_\iota},\nabla \Phi(u_t^{(j)})+\nabla \tilde h(u_t^{(j)})\rangle\\
<& \ 0\,.
\end{align*}
\end{proof}

\begin{cor}\label{cor:residuals}
{{Let $u_0^{(j)}\in\Omega$}, $j=1,\dots,J$ be the initial ensemble, $\varepsilon>0$ and assume that there exists a (global) minimizer for \eqref{eq:optprobsmoothed2}. Then it holds true that $$\lim\limits_{t\to\infty}\frac1J\sum\limits_{j=1}^J  |\Phi(u_t^{(j)})-\Phi({u^*_\iota})|^2 = 0,$$
where ${u^*_\iota}$ is a KKT-point of \eqref{eq:optprobsmoothed2}.}
\end{cor}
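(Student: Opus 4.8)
The plan is to reduce the statement to Theorem~\ref{thm:residuals} by passing to the observation space, exactly as Corollary~\ref{cor:pgf} reduces to Theorem~\ref{thm:pgf}. The whole point of the reduction is that the hypothesis that $A^\top\Gamma^{-1}A$ be positive definite, which drives the Lyapunov estimate of Theorem~\ref{thm:residuals} in the parameter space, becomes unnecessary once the residual is measured in the observation space. Writing $\Phi(u)=\psi(Au)$ with $\psi(w)=\frac12\|w-y\|_\Gamma^2$, the map $\psi$ is \emph{strictly} convex because $\Gamma^{-1}$ is positive definite, and this strict convexity is available for \emph{every} $A$. In particular, if $u^*_\iota$ and $\tilde u^*_\iota$ are two minimizers of the convex problem \eqref{eq:optprobsmoothed2}, then strict convexity of $\psi$ together with convexity of the barrier $-\sum_{i=1}^{2m}\log(-h_i(u))$ forces $Au^*_\iota=A\tilde u^*_\iota$; hence $w^*_\iota:=Au^*_\iota$ is unique and $\Phi(u^*_\iota)=\psi(w^*_\iota)$ is well defined even when the minimizer itself is not unique. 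This is the observation that makes the corollary meaningful in the degenerate regime $n\gg K$.

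First I would introduce the observation-space variables $w_t^{(j)}=Au_t^{(j)}$ and the candidate Lyapunov function
\[
\tilde V(u_t)=\frac1J\sum_{j=1}^J \tfrac12\|A u_t^{(j)}-w^*_\iota\|_\Gamma^2 ,
\]
and differentiate it along the flow \eqref{eq:transformedEnKF_smooth}. Using $\nabla\Phi(u^{(j)})=A^\top\Gamma^{-1}(Au^{(j)}-y)=A^\top\nabla\psi(w^{(j)})$, the misfit part of the right-hand side contributes a term built from the preconditioner $AD(u_t)A^\top$, which is symmetric positive semidefinite since $D(u_t)$ is positive definite for $\varepsilon>0$, acting on $w^{(j)}-w^*_\iota$. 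This is handled by the same sum-of-squares manipulation as in the proof of Theorem~\ref{thm:residuals}, now with $\Gamma^{-1}$ playing the role previously played by $A^\top\Gamma^{-1}A$, so that positive definiteness of $\Gamma^{-1}$ delivers the strict sign. The barrier part is controlled, as before, by the strict monotonicity of $\nabla(-\sum_i\log(-h_i))$. Together these give $\frac{d}{dt}\tilde V(u_t)<0$ whenever $\tilde V(u_t)>0$, so that $w_t^{(j)}\to w^*_\iota$ as $t\to\infty$ by the same Lyapunov argument used in Theorem~\ref{thm:residuals}.

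Finally I would translate back: since $\Phi(u_t^{(j)})=\psi(w_t^{(j)})$ and $\psi$ is continuous, the convergence $w_t^{(j)}\to w^*_\iota$ yields $\Phi(u_t^{(j)})\to\psi(w^*_\iota)=\Phi(u^*_\iota)$ for every $j$, and therefore $\frac1J\sum_{j=1}^J|\Phi(u_t^{(j)})-\Phi(u^*_\iota)|^2\to0$, which is the claim.

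I expect the main obstacle to be the barrier term, which in \eqref{eq:transformedEnKF_smooth} is \emph{not} multiplied by the preconditioner $D(u_t)$; consequently the flow is not a clean (preconditioned) gradient flow of $\tilde\Phi$, and differentiating $\tilde V$ in the observation space produces cross-terms coupling $AD(u_t)A^\top\nabla\psi(w^{(j)})$ with the un-preconditioned barrier gradient. Showing that these cross-terms carry the correct sign is the delicate part: it requires using the KKT relation $\nabla\tilde\Phi(u^*_\iota)=0$ together with the strict monotonicity of the barrier gradient, and extra care is needed because, once variance inflation is switched on, the subspace property of Lemma~\ref{lemma:subspace_prop} fails, so one cannot simplify by restricting the dynamics to $\spann\{u_0^{(j)}\}$. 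A secondary point to verify is that the ensemble remains in the interior of the box for all $t\ge0$, so that the barrier terms stay finite; this follows from the blow-up of $-\log(-h_i)$ at $\partial\Omega$, which keeps the trajectories feasible.
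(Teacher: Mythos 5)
Your proposal takes essentially the same route as the paper: the paper's entire proof of this corollary is that ``the claim follows similarly to the proof of Corollary \ref{cor:pgf}'', and that proof consists of passing to the observation space, noting that the misfit $\psi(w)=\frac12\|w-y\|_\Gamma^2$ is strictly convex there regardless of $A$, and repeating the Lyapunov argument of Theorem \ref{thm:pgf} (resp.\ Theorem \ref{thm:residuals}). Your function $\tilde V$, the replacement of positive definiteness of $A^\top\Gamma^{-1}A$ by positive definiteness of $\Gamma^{-1}$, and the final continuity step are exactly this argument written out in detail; your remark that $Au^*_\iota$, and hence $\Phi(u^*_\iota)$, is the same for every minimizer of \eqref{eq:optprobsmoothed2} is a well-posedness point the paper omits but needs.

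One comparison is worth recording: the ``delicate part'' you flag at the end is real, and it is precisely the step the paper suppresses rather than resolves. The barrier drift $\beta(u)=\sum_{i=1}^{2m}\frac{1}{h_i(u)}\nabla h_i(u)$ in \eqref{eq:transformedEnKF_smooth} depends on $u_t^{(j)}$ itself, not on $Au_t^{(j)}$, so the observation-space dynamics is \emph{not} literally ``the gradient flow of a strictly convex optimization problem in the observation space'' as asserted in the proof of Corollary \ref{cor:pgf}. Concretely, after inserting the KKT relation $\iota\nabla\Phi(u^*_\iota)=\beta(u^*_\iota)$, the derivative of $\tilde V$ contains, besides the dissipative quadratic term coming from $AD(u_t)A^\top\succeq 0$, the cross term
\[
\frac1J\sum_{j=1}^J\bigl\langle A^\top\Gamma^{-1}A\bigl(u_t^{(j)}-u^*_\iota\bigr),\,\beta\bigl(u_t^{(j)}\bigr)-D(u_t)\,\beta\bigl(u^*_\iota\bigr)\bigr\rangle,
\]
whose sign is not controlled by monotonicity of the barrier gradient: that monotonicity pairs $\beta(u)-\beta(u^*_\iota)$ against $u-u^*_\iota$, not against $A^\top\Gamma^{-1}A(u-u^*_\iota)$, and the preconditioner multiplies only one of the two barrier evaluations. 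The same obstruction is already present in the unpreconditioned case $D\equiv I$ underlying Corollary \ref{cor:pgf}. Neither your sketch nor the paper closes this step, so your write-up is exactly as complete as the paper's own proof and considerably more candid about where the remaining work lies.
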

\begin{proof}
{The claim follows similarly to the proof of Corollary \ref{cor:pgf}.}
\end{proof}

Instead of using variance inflation with constant multiplier $\varepsilon$, we will reduce the variance inflation over time. In particular, we will set $\varepsilon(t) = \frac{1}{t^\alpha+R}$, with $\alpha\in(0,1)$ and $R>0$. We will quantify the ensemble collapse with given rate in the following Proposition.
{
\begin{proposition}
\label{prop:ensemblespread_rate}

Let $u_0^{(j)}\in\Omega,$ $j=1,\dots,J$ be the initial ensemble and {$u(t)$ denotes the solution of \eqref{eq:transformedEnKF_smooth}}, $A^\top \Gamma^{-1}A$ be positive definite and $\varepsilon(t) = \frac{1}{t^\alpha+R}$, with $\alpha\in(0,1)$ and $R>0$. Then it holds true that $$ \frac1J\sum\limits_{j=1}^J |e_t^{(j)}|^2 \in \cO(t^{-(1-\alpha)}).$$
\end{proposition}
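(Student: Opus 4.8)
The plan is to run the same Lyapunov argument as in Proposition \ref{prop:ensemblespread}, but to keep track of the inflation-induced decay quantitatively now that $\varepsilon=\varepsilon(t)$ is time-dependent. I take $V(u_t)=\frac1J\sum_{j=1}^J\frac12|e_t^{(j)}|^2$ with $e_t^{(j)}=u_t^{(j)}-\bar u_t$, and recall two facts from the proof of Proposition \ref{prop:ensemblespread}: first, $V$ is non-increasing, so $V(u_t)\le V(u_0)=:V_0$ for all $t\ge0$; second, along \eqref{eq:transformedEnKF_smooth},
\[
\frac{dV(u_t)}{dt}=-\frac{\iota}{J}\sum_{j=1}^J\langle e_t^{(j)},D(u_t)A^\top\Gamma^{-1}A\,e_t^{(j)}\rangle+(\text{barrier term}),
\]
where the barrier term is non-positive by convexity and where, by Remark \ref{rem:vi}, $D(u_t)=C(u_t)+\varepsilon(t)I$ (the active coordinates carry no spread).

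Writing $M:=A^\top\Gamma^{-1}A$, I would split the quadratic form into $\langle e^{(j)},C(u)M e^{(j)}\rangle$ and $\varepsilon(t)\langle e^{(j)},M e^{(j)}\rangle$. The first is non-negative (the $\Tr(GB)$-type argument already used for Proposition \ref{prop:ensemblespread}, with Gram matrices $G,B\succeq0$) and may be discarded together with the barrier term; the second is coercive because $M$ is positive definite. This yields the linear differential inequality
\[
\frac{dV(u_t)}{dt}\le-2\iota\lambda_{\min}(M)\,\varepsilon(t)\,V(u_t).
\]

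The key step is to convert this into the stated polynomial order. Using the a priori bound $V\le V_0$, so that $V\ge V^2/V_0$, I upgrade the inequality to the quadratic form $\frac{dV}{dt}\le-c\,\varepsilon(t)\,V^2$ with $c=2\iota\lambda_{\min}(M)/V_0$. Dividing by $-V^2$ gives $\frac{d}{dt}\,V^{-1}\ge c\,\varepsilon(t)$, hence $V(u_t)^{-1}\ge V_0^{-1}+c\int_0^t\varepsilon(s)\,ds$. The rate is then read off from the elementary estimate: since $s^\alpha+R\le 2s^\alpha$ for $s\ge R^{1/\alpha}$ and $\alpha\in(0,1)$,
\[
\int_0^t\frac{ds}{s^\alpha+R}\ge\frac12\int_{R^{1/\alpha}}^t s^{-\alpha}\,ds=\frac{t^{1-\alpha}-R^{(1-\alpha)/\alpha}}{2(1-\alpha)},
\]
which grows like $t^{1-\alpha}$. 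Therefore $V(u_t)^{-1}\gtrsim t^{1-\alpha}$ and $\frac1J\sum_{j}|e_t^{(j)}|^2=2V(u_t)\in\cO(t^{-(1-\alpha)})$.

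The main obstacle is conceptual rather than computational: both the $C(u)$-term and the barrier only accelerate collapse, so the entire decay rate must be forced by the shrinking inflation $\varepsilon(t)$ alone; the slightly unusual move of trading the linear decay for a quadratic differential inequality via $V\le V_0$ is exactly what produces the order $t^{-(1-\alpha)}$ (rather than the faster, exponential-in-$t^{1-\alpha}$ bound that a direct Gr\"onwall estimate would give), and one must check that the inflation is genuinely coercive on the spread, which relies on positive definiteness of $M$ and on the splitting $D=C+\varepsilon I$ from Remark \ref{rem:vi}.
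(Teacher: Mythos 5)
Your proof is correct and follows essentially the same route as the paper: the same Lyapunov function $V(u_t)=\frac1J\sum_j\frac12|e_t^{(j)}|^2$, the same differential inequality $\frac{dV(u_t)}{dt}\le-c\,\varepsilon(t)\,V(u_t)$ obtained by discarding the nonnegative $C(u)$-part of the preconditioner and the barrier term, and the same observation that $\int_0^t\varepsilon(s)\,ds\sim t^{1-\alpha}$ forces the rate. The only (immaterial) difference is the final integration step: you upgrade to a Riccati-type inequality via $V\ge V^2/V_0$ and integrate $V^{-1}$, whereas the paper pulls $V(u_t)$ out of the integral using monotonicity, $V(u_0)\ge\sigma_{\min}(A^\top\Gamma^{-1}A)\int_0^t\varepsilon(s)\,ds\;V(u_t)$ — both yield $V(u_t)\lesssim V_0\bigl(\int_0^t\varepsilon(s)\,ds\bigr)^{-1}$; note only that your parenthetical about Gr\"onwall is backwards, since the exponential bound $V_0\exp\bigl(-c\,t^{1-\alpha}\bigr)$ it produces is stronger and would also imply the stated order $\cO(t^{-(1-\alpha)})$.
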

\begin{proof}
We consider again $V(u) = \frac1J \sum\limits_{j=1}^J \frac12|u^{(j)}-\bar u|^2$ and use
\begin{align*}
\frac{dV(u_t)}{dt} =& -\frac1J\sum\limits_{j=1}^J \langle u_t^{(j)}-\bar u_t, D(u_t)A^\top \Gamma^{-1}A(u_t^{(j)} -\bar u_t)\rangle\\
&-\frac1J\sum\limits_{j=1}^J \langle u_t^{(j)}-\bar u_t, \nabla \tilde h(u_t^{(j)}) -\bar  \nabla \tilde h(u_t))\rangle\\
&\le-\frac1J\sum\limits_{j=1}^J \langle u_t^{(j)}-\bar u_t, \varepsilon(t)A^\top \Gamma^{-1}A(u_t^{(j)} -\bar u_t)\rangle,
\end{align*}
similarly to the proof of \ref{prop:ensemblespread}. Thus, it follows that
\begin{equation*}
\frac{dV(u_t)}{dt} \le -\varepsilon(t)\sigma_{\min}(A^\top\Gamma^{-1}A)V(u_t).
\end{equation*}
Finally, using the bound
\begin{align*}
V(u_0)\ge \int_0^t\sigma_{\min}(A^\top\Gamma^{-1}A)\varepsilon(s)\,ds V(u_t),
\end{align*}
for all $t\ge0$ implies $$ \frac1J\sum\limits_{j=1}^J |e_t^{(j)}|^2 \in \cO(t^{-(1-\alpha)}).$$

\end{proof}
}

As shown before we can also prove the convergence  of the residuals in the parameter space when we reduce the variance inflation over time.

{
\begin{cor}\label{cor:residuals2}
Let $u_0^{(j)}\in\Omega$, $j=1,\dots,J$ be the initial ensemble and {$u(t)$ denotes the solution of \eqref{eq:transformedEnKF_smooth}}, {$A^\top \Gamma^{-1}A$ be positive definite} and $\varepsilon(t) = \frac{1}{t^\alpha+R}$, with $\alpha\in(\frac12,1)$ and $R>0$. Furthermore, let $u_\iota^*$ be the {KKT-point of  \eqref{eq:optprobsmoothed2}}. Then it holds true that $$\frac1J\sum\limits_{j=1}^J |\tilde r_t^{(j)}|^2 \in \cO(t^{-(1-\alpha)}).$$
\end{cor}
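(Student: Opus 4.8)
The plan is to replicate the Lyapunov argument of Theorem \ref{thm:residuals} with the constant inflation replaced by $\varepsilon(t)=\frac{1}{t^\alpha+R}$, and then to convert the resulting differential inequality with decaying coefficient into an algebraic rate exactly as in Proposition \ref{prop:ensemblespread_rate}. I set $V(u_t)=\frac1J\sum_{j=1}^J\frac12|u_t^{(j)}-u^*_\iota|^2$ and differentiate along the smoothed flow \eqref{eq:transformedEnKF_smooth}. Invoking Remark \ref{rem:vi} to write $D(u_t)=C(u_t)+\varepsilon(t)I$ on the complement of the active index set $\cI^+(u_t)$, the derivative $\frac{dV(u_t)}{dt}$ splits into a covariance contribution and an inflation contribution, just as in the proof of Theorem \ref{thm:residuals}.

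For the inflation contribution I would use that $A^\top\Gamma^{-1}A$ positive definite makes $\tilde\Phi$ strongly convex and that $\nabla\tilde\Phi(u^*_\iota)=0$ by the KKT conditions; this yields $\langle u_t^{(j)}-u^*_\iota,\nabla\tilde\Phi(u_t^{(j)})\rangle\ge\sigma_{\min}(A^\top\Gamma^{-1}A)|u_t^{(j)}-u^*_\iota|^2$. Together with the covariance contribution, which is non-positive since it equals $-\Tr(GH)$ for two positive semidefinite Gram matrices $G,H$ (the argument already used in Theorem \ref{thm:residuals}), this gives the differential inequality
\[
\frac{dV(u_t)}{dt}\le-\sigma_{\min}(A^\top\Gamma^{-1}A)\,\varepsilon(t)\,V(u_t).
\]
Since $V$ is then non-increasing, the integral trick of Proposition \ref{prop:ensemblespread_rate} applies verbatim: $V(u_0)\ge\big(\int_0^t\sigma_{\min}(A^\top\Gamma^{-1}A)\,\varepsilon(s)\,ds\big)V(u_t)$, and because $\int_0^t(s^\alpha+R)^{-1}\,ds\in\Theta(t^{1-\alpha})$ for $\alpha\in(0,1)$, this yields $V(u_t)\in\cO(t^{-(1-\alpha)})$, that is $\frac1J\sum_j|\tilde r_t^{(j)}|^2\in\cO(t^{-(1-\alpha)})$.

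The delicate step, and the place where the stronger hypothesis $\alpha\in(\tfrac12,1)$ (rather than $\alpha\in(0,1)$ in Proposition \ref{prop:ensemblespread_rate}) should be needed, is the non-positivity of the covariance contribution. For the ensemble spread the analogous mean--fluctuation cross term vanishes because $\frac1J\sum_j(u_t^{(j)}-\bar u_t)=0$, whereas measuring against the fixed point $u^*_\iota$ instead of the moving mean $\bar u_t$ reintroduces a coupling between the mean residual $\bar u_t-u^*_\iota$ and the fluctuations $e_t^{(j)}=u_t^{(j)}-\bar u_t$. I would therefore split $V(u_t)=\frac12\frac1J\sum_j|e_t^{(j)}|^2+\frac12|\bar u_t-u^*_\iota|^2$, bound the first summand directly by Proposition \ref{prop:ensemblespread_rate}, and treat the coupling in the mean-residual equation as a source term governed by $\|C(u_t)\|^2\in\cO(t^{-2(1-\alpha)})$. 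I expect the binding requirement to be integrability of this source over $[0,\infty)$, i.e. $\int^\infty t^{-2(1-\alpha)}\,dt<\infty$, which holds exactly when $\alpha>\tfrac12$; making this balancing argument rigorous, rather than relying on the formally non-positive covariance term as in Theorem \ref{thm:residuals}, is what I expect to be the main obstacle.
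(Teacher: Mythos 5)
Your first two paragraphs are, in substance, exactly the paper's proof of Corollary \ref{cor:residuals2}: the paper takes the same Lyapunov function $V(u_t)=\frac1J\sum_{j}\frac12|u_t^{(j)}-u_\iota^*|^2$, repeats the derivative computation of Theorem \ref{thm:residuals} with $\varepsilon$ replaced by $\varepsilon(t)$, discards the covariance contribution as non-positive, bounds the inflation/barrier terms via convexity and the KKT condition, and concludes with the same differential inequality $\frac{dV(u_t)}{dt}\le-\varepsilon(t)\sigma_{\min}(A^\top\Gamma^{-1}A)V(u_t)$, which is then integrated exactly as in Proposition \ref{prop:ensemblespread_rate}. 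So the core of your proposal coincides with the paper's route.

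Your third paragraph is where you depart from the paper, and your suspicion there is well founded --- but it points at a gap in the paper's own argument rather than a step the paper actually carries out. The paper never uses $\alpha\in(\frac12,1)$; its proof goes through verbatim for any $\alpha\in(0,1)$, which already signals that something is being glossed over. What the paper relies on instead is writing the covariance contribution as the double sum $-\frac{1}{J^2}\sum_{j,k}\langle r^{(j)},r^{(k)}\rangle\langle A^\top\Gamma^{-1}A r^{(k)},r^{(j)}\rangle$ with $r^{(j)}=u_t^{(j)}-u_\iota^*$ and dropping it as non-positive (a trace of a product of two Gram matrices). This identification is not valid, because $C(u_t)$ is centered at $\bar u_t$, not at $u_\iota^*$: the correct identity is
\begin{equation*}
C(u_t)=\frac1J\sum_{k=1}^J r^{(k)}\otimes r^{(k)}-\bar r_t\otimes\bar r_t, \qquad \bar r_t=\bar u_t-u_\iota^*,
\end{equation*}
so the true covariance contribution to $\frac{dV}{dt}$ carries the additional term $+\frac1J\sum_j\langle r^{(j)},\bar r_t\rangle\langle A^\top\Gamma^{-1}A\bar r_t,r^{(j)}\rangle$, which is not sign-definite. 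For instance, with $J=2$, $Q=A^\top\Gamma^{-1}A=\left(\begin{smallmatrix}1&2\\2&5\end{smallmatrix}\right)$, ensemble deviations $e^{(1)}=-e^{(2)}=(1,0)^\top$ and mean residual $\bar r_t=(1,-2)^\top$, one computes $-\frac1J\sum_j\langle r^{(j)},C(u_t)Qr^{(j)}\rangle=+2>0$, so the covariance term can push $V$ upward. Hence the step you call delicate is a genuine gap, shared by Theorem \ref{thm:residuals} and this corollary; your main argument inherits it insofar as it leans on the ``formally non-positive'' covariance term. Your sketched repair --- splitting $V$ into spread plus mean residual, controlling the spread by Proposition \ref{prop:ensemblespread_rate}, and treating the coupling as a source of order $t^{-2(1-\alpha)}$ whose integrability requires $\alpha>\frac12$ --- is not in the paper at all, but it is a sensible route to a rigorous proof and is the only explanation on the table for the otherwise unused hypothesis $\alpha\in(\frac12,1)$. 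Carrying that argument out would strengthen, not merely reproduce, the paper's result.
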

\begin{proof}
We define the function $V(u) = \frac1J\sum\limits_{j=1}^J \frac12|u^{(j)}-u_\iota^*|^2$ and will prove $$\frac{dV(u_t)}{dt}<0,$$ for $u_t= \{u_t^{(j)}\}_{j=1}^J$. Similarly to the proof of Theorem \ref{thm:residuals} we obtain
\begin{align*}
\frac{d V(u_t)}{dt} =&-\frac{1}{J^2}\sum\limits_{j,k=1}^J\langle u_t^{(j)}-{u_\iota^*},u_t^{(k)}-{u_\iota^*}\rangle\langle A^\top \Gamma^{-1}A(u_t^{(k)}-{u_\iota^*}),u_t^{(j)}-{u_\iota^*}\rangle\\
&-\frac1J\sum\limits_{j=1}^J(\varepsilon(t)  \langle u_t^{(j)} - {u_\iota^*},\nabla \Phi(u_t^{(j)})\rangle +  \langle u_t^{(j)} - {u_\iota^*},\nabla \tilde h(u_t^{(j)})\rangle).\\
\end{align*}
and we conclude with
$$\frac{dV(u_t)}{dt} \le -\varepsilon(t)\sigma_{\min}(A^\top\Gamma^{-1}A)V(u_t).$$
\end{proof}

\begin{remark}
As before the linear convergence analysis presented in this section can be generalized to the ESRF. To do so the Lyapunov function $V$ needs to be modified to include the term $\bar{u}_t$, based on its gradient flow \eqref{eq:limit2}.
\end{remark}

\section{Numerical experiments}
\label{sec:num}

For this section we introduce the numerical experiments necessary for the projection based optimization. We conduct our experiments for both a linear and non-linear setting. Our forward models will consist of elliptic PDEs. In each case the projection will be tested through its respective continuum limit \eqref{eq:projection_cont_time}. {Aside from aiming to show the recovery of truth, or underlying unknown denoted as $u^{\dagger}$}, we also show the spread of the {ensemble and also the difference to the (not necessarily unique) KKT-point $u^*$ of \eqref{eq:optprob}, which we have approximately computed with the MATLAB function $\mathtt{fmincon}$. These are summarized below.  \\
\begin{enumerate}
\item Error: $|e^{(j)}|^2_2 = |u_t^{(j)}-\bar u_t|^2_2$.
\item Residual: $|r^{(j)}|^2_2 =|u_t^{(j)}-u^{\dagger}|^2_2$.
\item KKT-Residual: $|\tilde r^{(j)}|^2_2 = |u_t^{(j)}-u^*|^2$.
\item Error evaluated through operator $A$: $|Ae|^2_{\Gamma}$.
\item Residual evaluated through operator $A$: $|Ar|^2_{\Gamma}$. 
\item Error in the cost function: $|\Phi(u^{(j)})-\Phi(u^*)|^2_2$.\\
\end{enumerate}
}
{We emphasize for the numerics that we consider a system with noise-free measurements to verify the presented theory and that we will consider a smoothened version of algorithms in the continuous time limits.}
\subsection{Linear inverse problem}
\label{ssec:linear}
We now seek to numerical check whether the BC optimization introduced in Section \ref{sec:conv} works in practice. For the continuum limit we solve the ODE through the MATLAB solver $\mathtt{ode45}$. In both cases we use an ensemble size of $J=5$.  Our forward model will be a linear 1D elliptic PDE of the form, where we seek a solution 
 $p \in  \mathcal{U}:= H^1_0(D)$ from 
\begin{align}
\label{eq:fwd1}
\frac{d^2p}{dx^2} + p &= u \ \ \ x \in D, \\
\label{eq:bc1}
p &= 0  \ \ \ x \in \partial D.
\end{align}
The inverse problem associated with \eqref{eq:fwd1} is the recovery of $u \in \mathcal{X}$ from 16 pointwise measurements of $p$. {Our forward solver for \eqref{eq:fwd1} is a piecewise finite element method with mesh size $h=2^{-8}$.} We will assume our computational domain is $D=(0,\pi)$. Our mapping $\mathcal{G}(\cdot) = A \cdot$  is assumed to be linear where we set $A = \mathcal{O} \circ \mathcal{G}$, where $\mathcal{O}:\mathcal{U} \rightarrow \R^K$ is the observational operator and $G:\mathcal{X} \rightarrow \mathcal{U}$ is the forward operator. We specify the covariance of the noise as $\Gamma = \gamma^2I$ where $\gamma = 0.01$, and we choose $T =10^6$. {The initial ensemble $u_0$ is chosen through a Fourier basis representation, and the inflation parameters are taken as $\alpha = 0.75$ and $R=1$.}

As stated in Remark \ref{rem:vi}, the projected EnKF with variance inflation can be viewed as a special case of the transformed version provided in \eqref{eq:transformedEnKF}. This suggests that both methods should perform similarly and outperform the original EnKF with no constraint. For the numerics we now specify the projected EnKF as the projected EnKF without variance inflation and the transformed EnKF as the projected EnKF with variance inflation. For the linear case we compared the projected EnKF, the transformed version and the original method without projecting onto the box. {Note that the numerical solution of both the projected EnKF and the transformed EnKF are based on a smoothed version of the indicator function $1_y(x)$ by a linear function $\tilde 1_y(x)$  with $\tilde 1_y(y)=1$ and $\tilde 1_y(y\pm\iota)=0$ for the upper and lower bound respectively. \smallskip

}
Our numerics will consist of two different cases:
\begin{enumerate}
\item The truth $u^\dagger$ lies outside the box and $\cO$ gives full observations.
\item The truth $u^\dagger$ lies outside the box, and $\cO$ gives low dimensional observations. \smallskip
\end{enumerate}
To understand the effect of the different forms of observations, if we have full observations then there exists a unique KKT-point to \eqref{eq:optprob}, implying the true parameter is a KKT point. If we have low dimensional observations then $A^\top\Gamma^{-1}A$ is only positives semi-definite. Thus, we can only expect convergence of the cost functions. For the first case we we choose full observations and for the latter we choose 15 observations.

\begin{figure}[!htb]
	\begin{subfigure}[c]{0.49\textwidth}
	\includegraphics[width=1.1\textwidth]{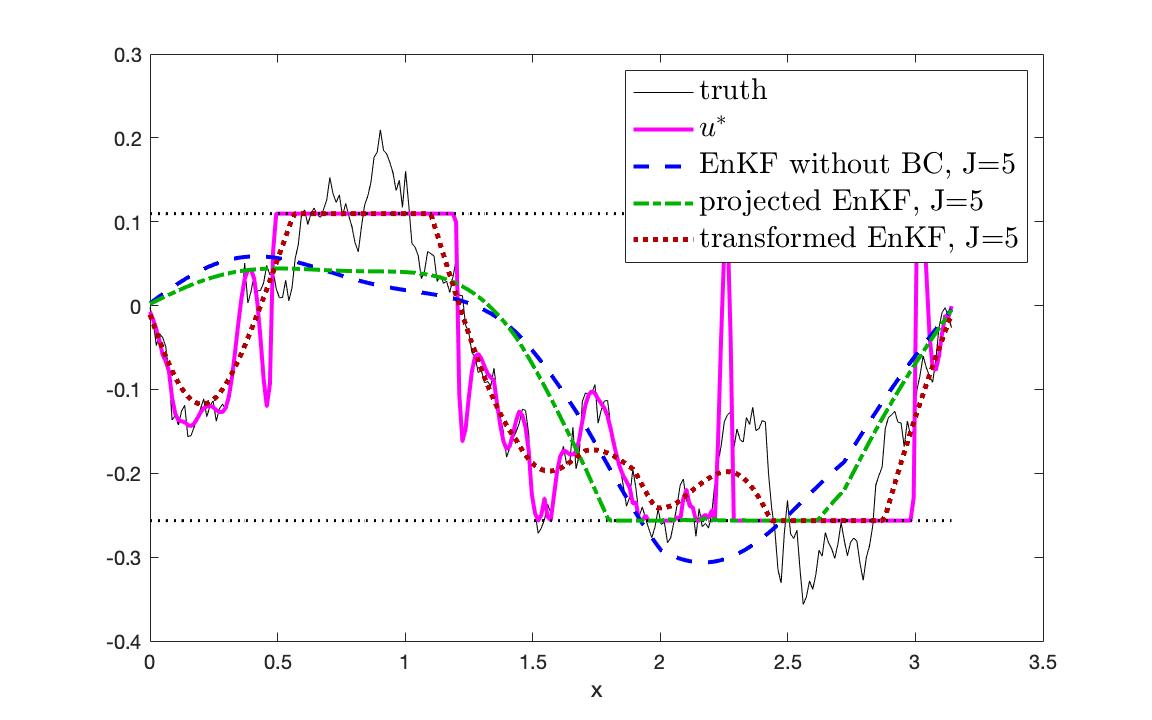}
	\end{subfigure}
	\begin{subfigure}[c]{0.49\textwidth}
	\includegraphics[width=1.1\textwidth]{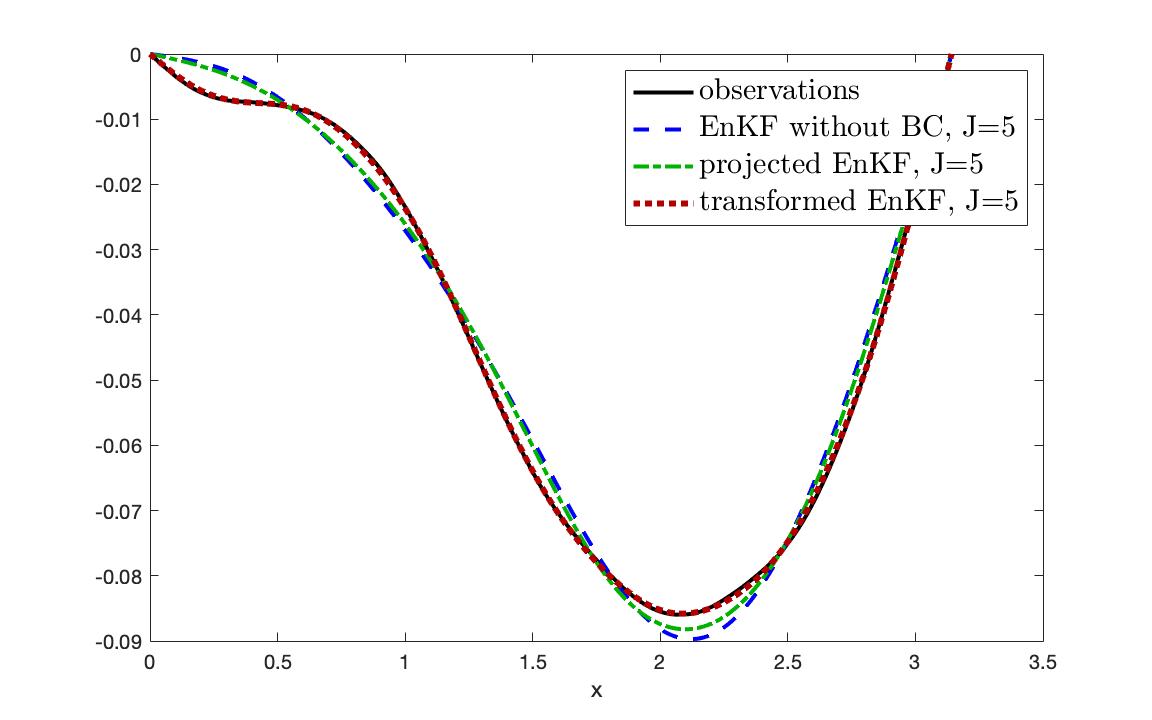}
	\end{subfigure}
    \caption{Transformed EnKF estimation in comparison to the EnKF estimation and the projected EnKF estimation. $J=5$ particles have been simulated.}\label{fig:A1A3_Test}
\end{figure} 

\begin{figure}[!htb]
	\begin{subfigure}[c]{0.49\textwidth}
	\includegraphics[width=1.1\textwidth]{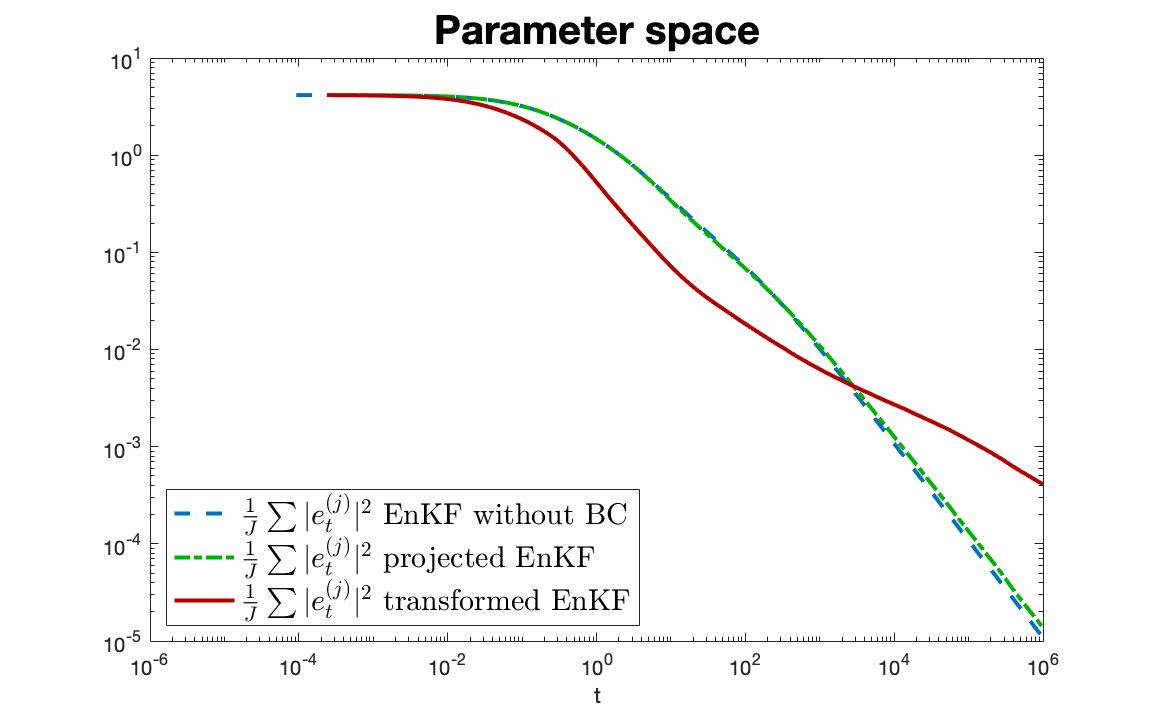}
	\end{subfigure}
	\begin{subfigure}[c]{0.49\textwidth}
	\includegraphics[width=1.1\textwidth]{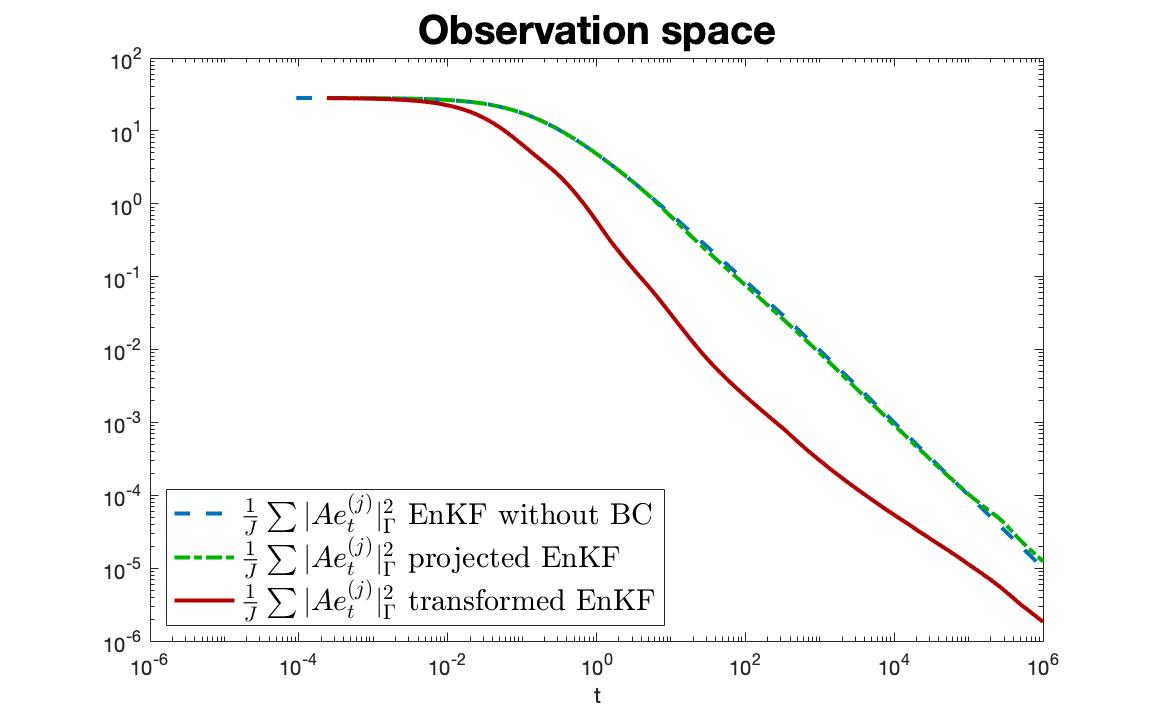}
	\end{subfigure}
    \caption{Ensemble spread in the transformed EnKF in comparison to the EnKF and the projected EnKF. $J=5$ particles have been simulated.}\label{fig:A1A3_Tspread}
\end{figure} 

\begin{figure}[!htb]
	\begin{subfigure}[c]{0.49\textwidth}
	\includegraphics[width=1.1\textwidth]{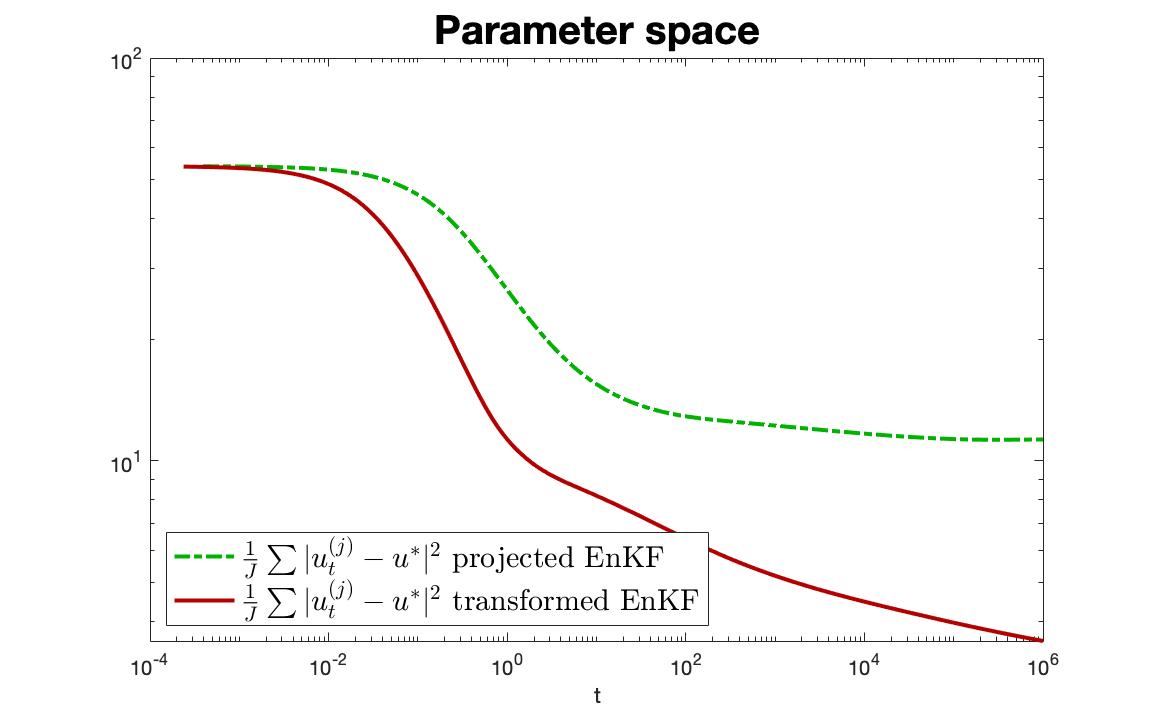}
	\end{subfigure}
	\begin{subfigure}[c]{0.49\textwidth}
	\includegraphics[width=1.1\textwidth]{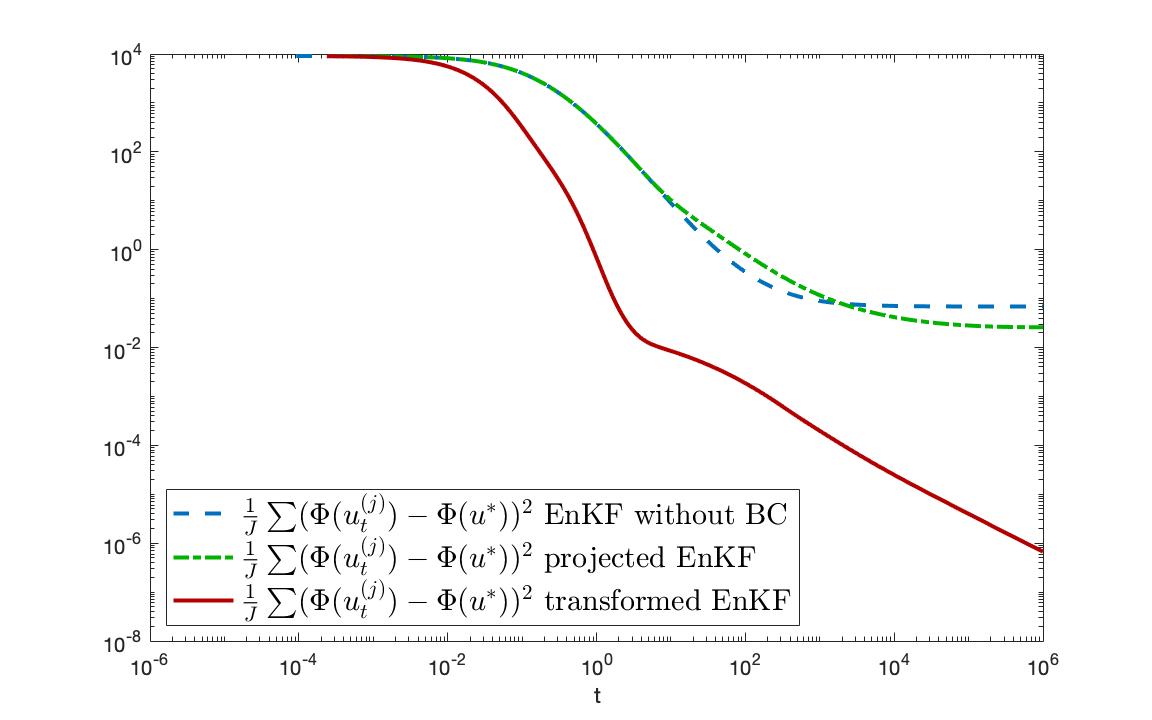}
	\end{subfigure}
    \caption{KKT-Residuals and difference of the misfit functional {and the global minimum} in the transformed EnKF in comparison to the projected EnKF. $J=5$ particles have been simulated.}\label{fig:A1A3_Tkkt}
\end{figure} 

Our first set of experiments for the linear PDE are shown in Figures \ref{fig:A1A3_Test} -  \ref{fig:A1A3_Tkkt}, where we assume that we have full observations. The left hand side image of Figure \ref{fig:A1A3_Test} compares the performance of the different methods at reconstructing the truth. As known with EnKF, it can exhibit a smooth reconstruction which is exactly seen. For the projected EnKF we notice a similar performance, however it takes into consideration the constraints as part of its reconstruction is on the boundary. However when analyzing the transformed EnKF, we see an improvement over previous methods, which is also evident from the comparison of the observations from the right hand image.

Figure  \ref{fig:A1A3_Tspread} shows the ensemble collapse of each method, which we see is achieved. We observe the spread behaves almost at an identical rate for the EnKF and projected EnKF. To highlight further the benefit of using the transformed version, Figure \ref{fig:A1A3_Tkkt} demonstrates this by showing a sharper decrease in both the KKT residuals and the difference of the misfit functional and global minimizer. As the projected method eventually levels flattens at $10^{-1}$ for both, the transformed version continues the achieve smaller differences.

\begin{figure}[!htb]
	\begin{subfigure}[c]{0.49\textwidth}
	\includegraphics[width=1.1\textwidth]{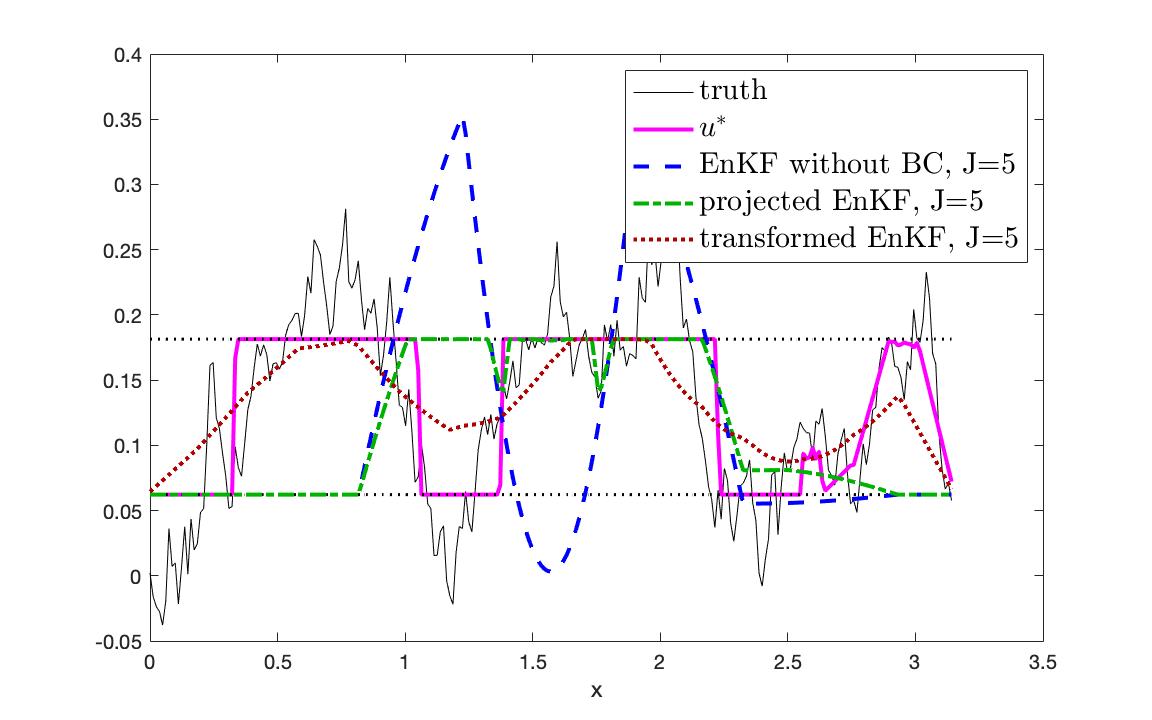}
	\end{subfigure}
	\begin{subfigure}[c]{0.49\textwidth}
	\includegraphics[width=1.1\textwidth]{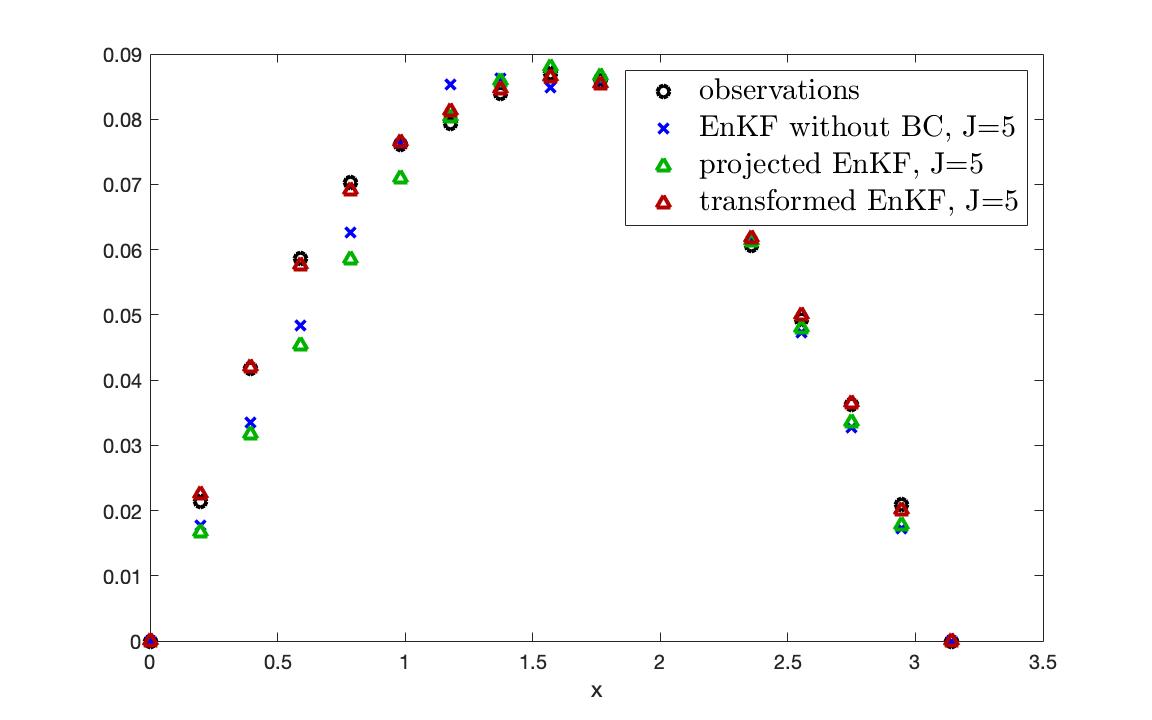}
	\end{subfigure}
    \caption{Transformed EnKF estimation in comparison to the EnKF estimation and the projected EnKF estimation. $J=5$ particles have been simulated.}\label{fig:A1A4_Test}
\end{figure} 

\begin{figure}[!htb]
	\includegraphics[width=0.49\textwidth]{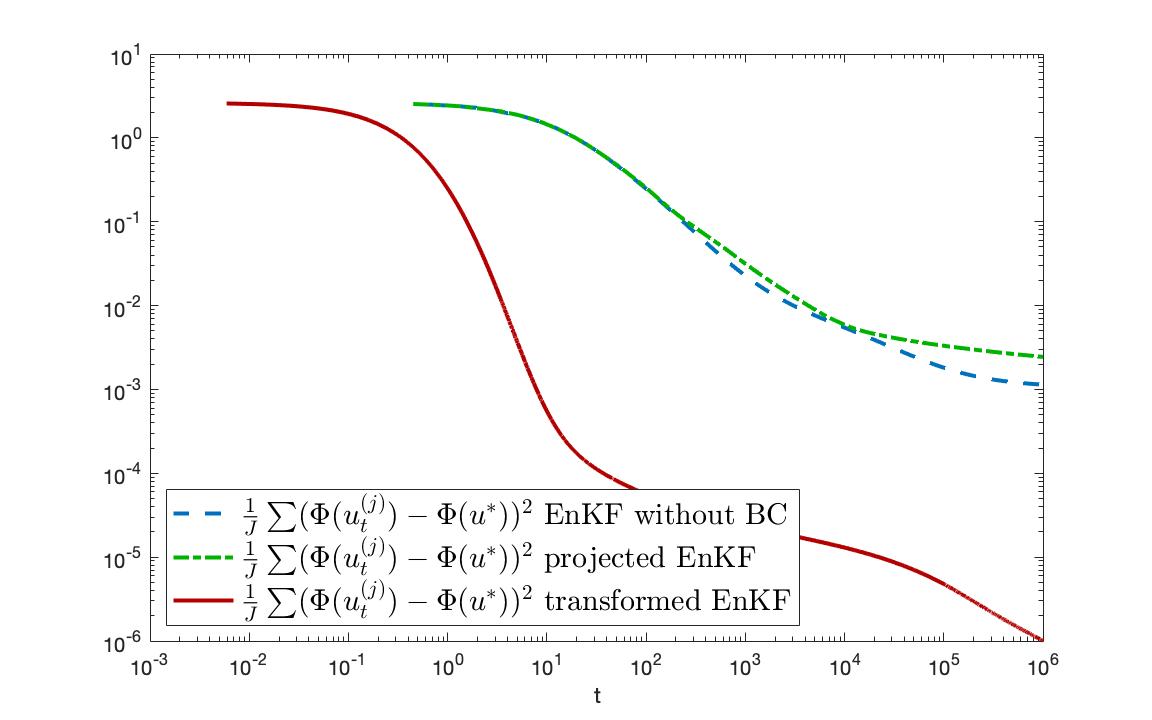}
    \caption{Difference of the misfit functional and the global minimum in the transformed EnKF in comparison to the EnKF and the projected EnKF. $J=5$ particles have been simulated.}\label{fig:A1A4_Tkkt}
\end{figure} 

The second set of experiments in the linear setting are shown in Figures \ref{fig:A1A4_Test} -  \ref{fig:A1A4_Tkkt}, where we assume that we have 15 low dimensional observations. The results obtained here are analogous to the full observation case, in that the transformed version outperforms the other methods.

\subsection{Non-linear inverse problem} {To demonstrate the effectiveness of the transformed projection, we consider a 2D nonlinear PDE. We will use an analogous nonlinear version of our linear elliptic PDE, which arises in subsurface flow. The forward problem associated with the Darcy flow model is using the permeability $\kappa \in L^{\infty}(D)$ to solve 
\begin{align}
\label{eq:darcy}
-\nabla\cdot({\kappa}\nabla p) &= f, \quad x \in  D, \\
\label{eq:bc2}
 p &=0, \quad x\in \partial D,
\end{align}
where $\nabla \cdot$ denotes the divergence and we have imposed zero boundary conditions \eqref{eq:bc2}. Our forward solver is based on a second-order centred finite difference method with mesh size $h=2^{-4}$. We take the source term of \eqref{eq:darcy} as $f=1$.
The inverse problem associated with \eqref{eq:darcy} , where we specify $\kappa=\exp(u)$, is to reconstruct $\kappa$ from noisy linear functionals 
\begin{equation}
\label{eq:func}
 y_k = l_k(\kappa) + \eta_k, \quad k=1,\cdots,K. 
\end{equation}
By defining $\mathcal{G}_k(u) = l_k(\kappa)$, we can rewrite \eqref{eq:func} as the inverse problem 
\begin{equation*}
y = \mathcal{G}(u) + \eta, \quad \eta \sim N(0,\Gamma).
\end{equation*}
The inverse Darcy flow equation has been studied, and is continuously used as an example which is well-represented for PDE-constrained Bayesian inversion. We stick to the same setting as in subsection \ref{ssec:linear}, but with the modifications of only having 16 low dimensional observations, and testing a system with $l=64$. Our prior $u \sim N(0,C)$ is simulated through a Karhunen-Lo\`{e}ve expansion of the form
\begin{equation}
\label{eq:kl}
u = \sum^{\mathcal{J}}_{j} \sqrt{\lambda_j}\xi_j \phi_j, \quad \xi \sim N(0,I),
\end{equation}
where $({\lambda_j},{\phi_j})$ is the eigenbasis of the covariance operator $C$, expressed as
$$
C := \sigma^2(I - \Delta u)^{-\nu},
$$
where $\sigma^2$ is a scaling constant, $\nu > d/2$ is the smoothness of the prior and $\Delta$ is the Laplace operator in 2D. The hyperparameters are specified as $\sigma^2=1$ and $\nu=2$. As suggested through Remark \ref{rem:vi} the projected EnKF with variance inflation can be viewed as a special case of the transformed version. 
}

{Since our analysis was in the linear setting we need to discuss how to incorporate variance inflation in the nonlinear setting. 

\subsubsection{Variance inflation in the nonlinear setting}
In the nonlinear case there is no obvious representation of the continuous time limit of the EnKF as preconditioned gradient flow. We consider the following approximation based on the Taylor expansion
\begin{equation*}
\cG(u^{(j)})-\cG(\bar u) \approx D\cG(\bar u) (u^{(j)}-\bar u),
\end{equation*}
where $D\cG(\bar u)$ is the Jacobian matrix of $\cG$ at $\bar u$. Hence, we will approximate the mixed sample covariance $C^{up}(u)$ by

\begin{equation*}
C^{up}(u) \approx \frac{1}{J} \sum\limits_{j=1}^J (u^{(j)}-\bar u)(u^{(j)}-\bar u)^\top D\cG(\bar u)^\top = C(u)D\cG(\bar u)^\top.
\end{equation*}
Note that we have introduced a second approximation $\bar\cG \approx \cG(\bar u)$. We will use variance inflation in sense of
\begin{equation}\label{eq:apprx_cup}
C^{up}(u) \approx C(u)D\cG(\bar u)^\top \mapsto (C(u)+\vartheta C_0)D\cG(\bar u)^\top,
\end{equation}
and we will write the EnKF with variance inflation through the following approximation
\begin{equation}
\label{eq:inf_limit2}
\frac{du^{(j)}}{dt} = (C^{up}(u)+\vartheta C_0 D\cG(\bar u)^T)(y-\cG(u^{(j)})).
\end{equation}

By application of this method of variance inflation one should mention that there exists the disadvantage of computing the derivative of the nonlinear map $\cG$. 

\begin{figure}[!htb]	
	\includegraphics[width=0.75\textwidth]{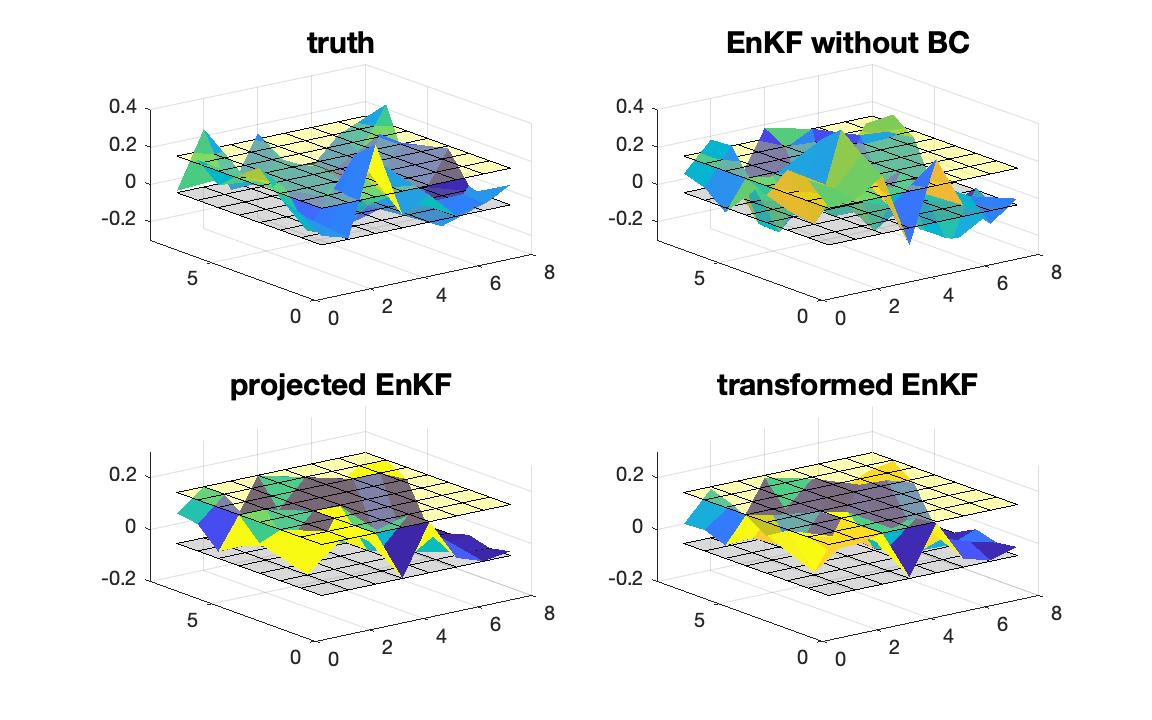}
	    \caption{Transformed EnKF parameter estimation in comparison to the EnKF estimation and the projected EnKF estimation. $J=5$ particles have been simulated.}\label{fig:non_A1A4_Test2}
\end{figure} 

\begin{figure}[!htb]	
	\includegraphics[width=0.75\textwidth]{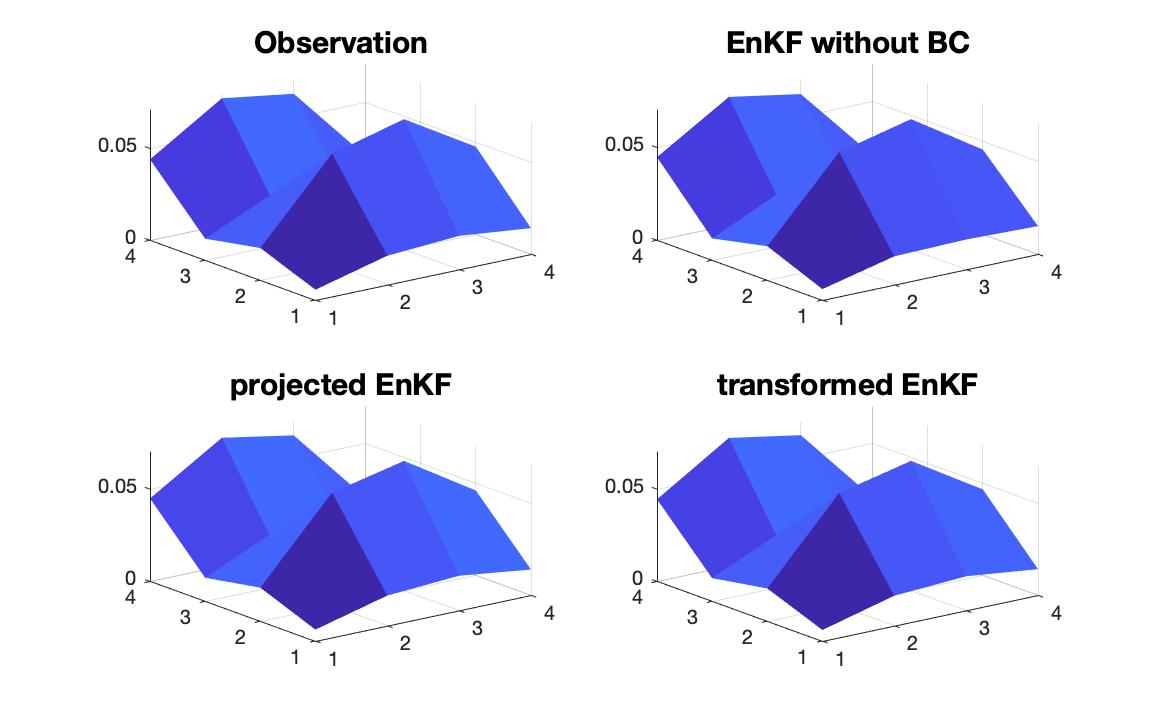}
    \caption{Transformed EnKF observation estimation in comparison to the EnKF estimation and the projected EnKF estimation. $J=5$ particles have been simulated.}\label{fig:non_A1A4_Testo2}
\end{figure} 

\begin{figure}[!htb]
	\begin{subfigure}[c]{0.49\textwidth}
	\includegraphics[width=1\textwidth]{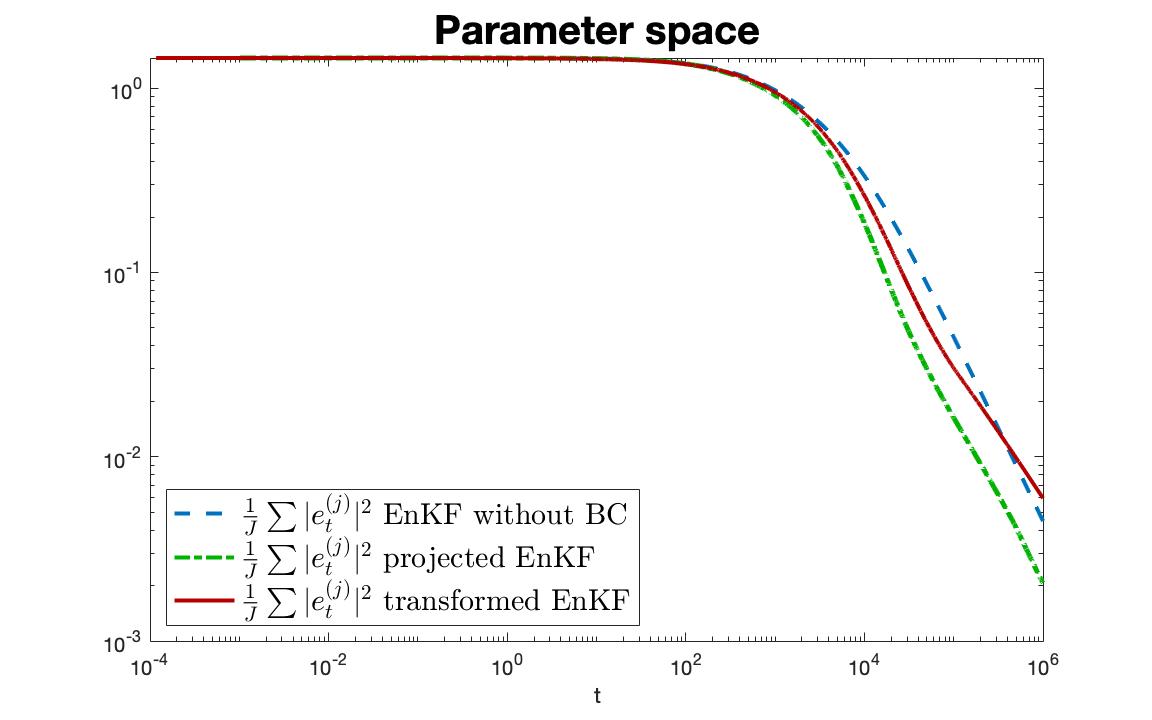}
	\end{subfigure}
	\begin{subfigure}[c]{0.49\textwidth}
	\includegraphics[width=1\textwidth]{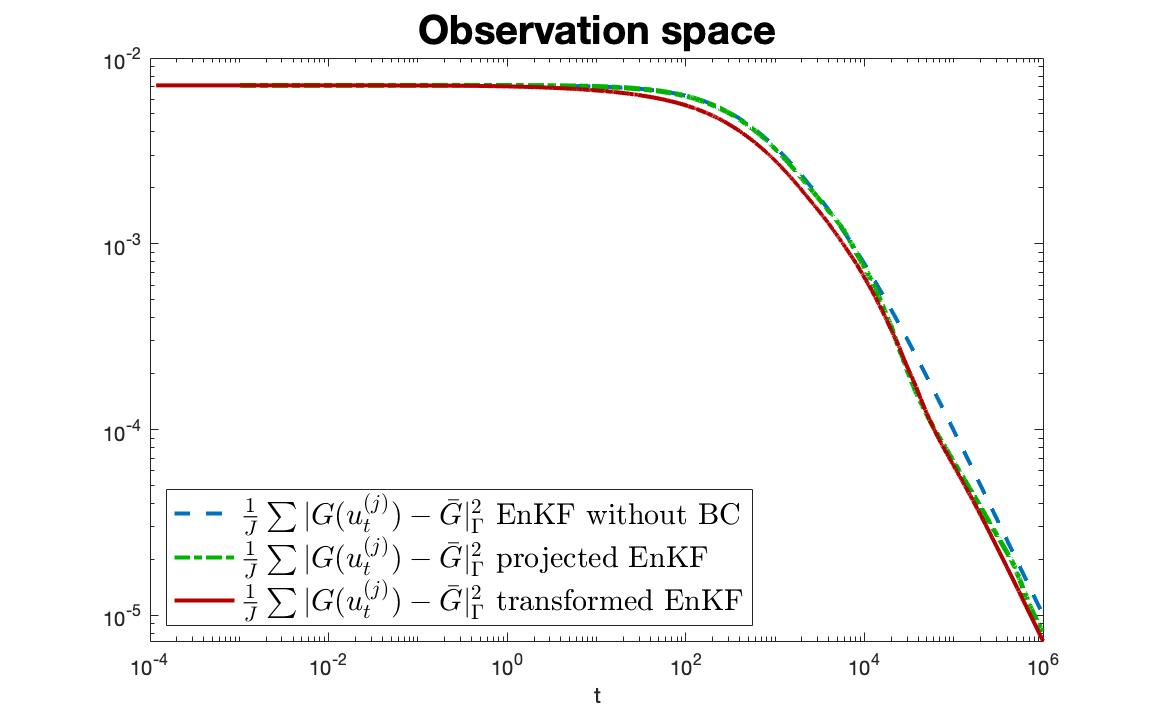}
	\end{subfigure}
    \caption{Ensemble spread in the transformed EnKF in comparison to the EnKF and the projected EnKF. $J=5$ particles have been simulated.}\label{fig:non_A1A4_Tspread2}
\end{figure} 

\begin{figure}[!htb]
	\includegraphics[width=0.49\textwidth]{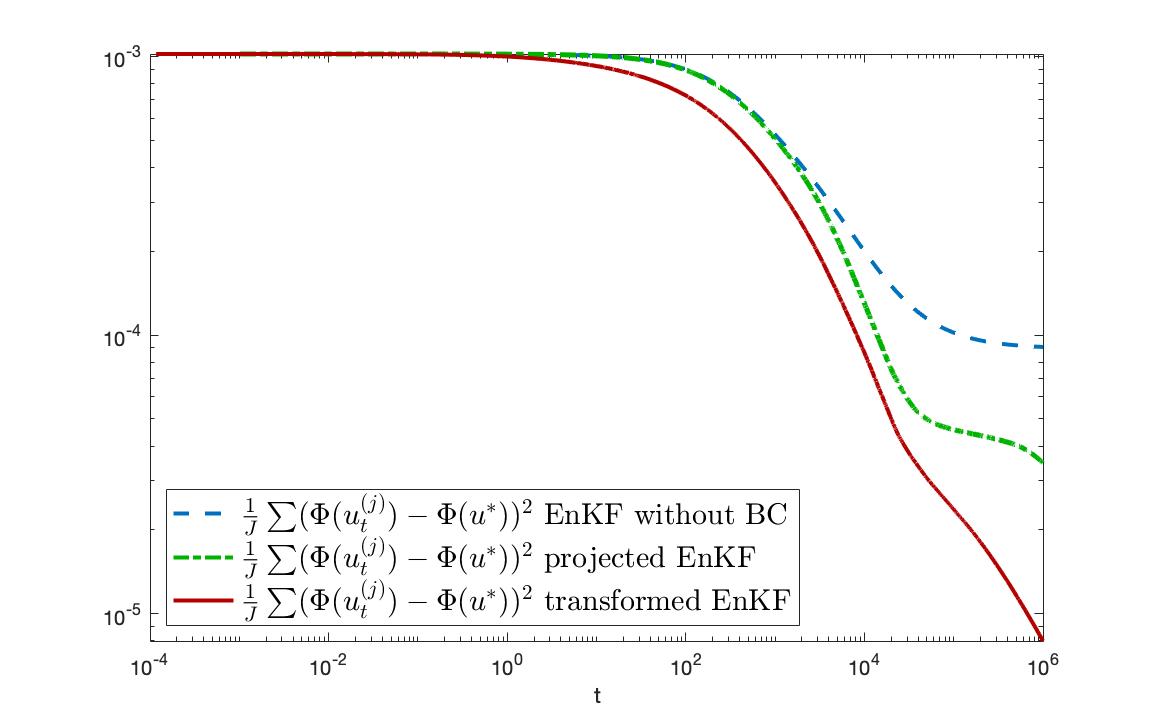}
    \caption{Difference of the misfit functional {and the global minimum} in the transformed EnKF in comparison to the projected EnKF. $J=5$ particles have been simulated.}\label{fig:non_A1A4_Tkkt2}
\end{figure} 
}

{For the non-linear experiments Figure \ref{fig:non_A1A4_Test2} shows the performance of each method w.r.t. to the truth. It can be seen that the EnKF without constraints does not remain within the feasible set unlike the other two methods, despite Figure  \ref{fig:non_A1A4_Testo2} looking identical across all methods. To see a more in-depth representation of the performance we analyze the ensemble spread seen in Figure \ref{fig:non_A1A4_Tspread2} where they all seem to converge to zero, where the rates look similar. However by looking at the difference of the misfit functional with the minimizer in Figure \ref{fig:non_A1A4_Tkkt2}, we see the difference of the transformed method continues to decrease while for the projected EnKF it starts to flatten, likewise with the original EnKF. This highlights further the benefit of using the transformed EnKF. }

\section{Conclusion}
\label{sec:conc}
{Our motivation behind this work was to gain insight in how constrained optimization could be implemented within ensemble Kalman inversion (EKI). To accommodate our algorithm of choice, we made use of the formulated projected Newton method discussed by Bertsekas et al. \cite{DPB82}. This was achievable as the projected Newton method could be related through the least-squares formulation. As a result a box-constrained optimization method was adapted for EKI. This included deriving a continuum limit and a gradient flow structure. The key insight from this work is that the projected EKI with box constraints does not always result in a correct descent direction to the minimizer. By tempering with the preconditioner through additive variance inflation it was shown that the correct descent direction could be acquired. This was shown analytically through a convergence analysis and various numerical experiments. The results presented in this work can also generalize to the ESRF.

As EKI can be interpreted as an optimizer, this encourages further research into adopting tools from optimization theory. The current article provides a basis into why one would be interested in using these approaches, however there remains numerous other avenues. These include trying to quantify the ensemble Kalman filter for inversion as an optimizer as well as implementing some form of gradient descent techniques \cite{CLTZ16}.  Another direction to consider are hierarchical inverse problems. A hierarchical version of EKI was proposed in \cite{CIRS17}, where the hyperparameters were governed through a uniform distribution. As there is no guarantee the parameters will stay within the range of distribution, one could apply the techniques in this paper to a hierarchical setting.
Finally in the non-linear setting gradients are required to approximate the continuum limits with variance inflation. As this procedure can be costly in high dimensions, we hope to improve on this burden with various computational techniques. }

\section*{Acknowledgements} 
ClS would like to thank the Isaac Newton Institute for Mathematical Sciences for support and hospitality during the programme Uncertainty quantification for complex systems: theory and methodologies when work on this paper was undertaken. NKC acknowledges a Singapore Ministry of Education Academic Research Funds Tier 2 grant [MOE2016-T2-2-135]. SW is grateful to the DFG RTG1953 ``Statistical Modeling of Complex Systems and Processes” for funding of this research.

\bibliographystyle{siam}
\bibliography{mybib}

\begin{thebibliography}{10}

\bibitem{ARB18}
{\sc N.~Amor, G.~Rasool, and N.~C. Bouaynaya}, {\em Constrained state
  estimation - a review}, ArXiv preprint, arxiv:807.03463,  (2018).

\bibitem{JLA07}
{\sc J.~L. Anderson}, {\em An adaptive covariance inflation error correction
  algorithm for ensemble filters}, Tellus A, 59 (2007), pp.~210--224.

\bibitem{JLA09}
\leavevmode\vrule height 2pt depth -1.6pt width 23pt, {\em Spatially and
  temporally varying adaptive covariance inflation for ensemble filters},
  Tellus A, 61 (2009), pp.~72--83.

\bibitem{BR10a}
{\sc K.~Bergemann and S.~Reich}, {\em A localization technique for ensemble
  {{{{K}alman}}} filters}, Quarterly Journal of the Royal Meteorological
  Society, 136 (2010), pp.~701--707.

\bibitem{BR10b}
\leavevmode\vrule height 2pt depth -1.6pt width 23pt, {\em A mollified ensemble
  {{{{K}alman}}} filter}, Quarterly Journal of the Royal Meteorological
  Society, 136 (2010), pp.~1636--1643.

\bibitem{DPB82}
{\sc D.~Bertsekas}, {\em Projected newton methods for optimization problems
  with simple constraints}, SIAM Journal on Control and Optimization, 20
  (1982), pp.~221--246.

\bibitem{DPB15}
{\sc D.~P. Bertsekas}, {\em {Nonlinear programming}}, Athena Scientific,
  2nd~ed., Sept. 2008.
  
  \bibitem{BM18}
A.N. Bishop and P. Del Moral,
 On the stability of matrix-valued Riccati diffusions, arXiv e-print, arXiv:1808.00235 (2018).


\bibitem{BSW18}
{\sc D.~Bl{\"o}mker, C.~Schillings, and P.~Wacker}, {\em A strongly convergent
  numerical scheme from ensemble kalman inversion}, SIAM Journal on Numerical
  Analysis, 56 (2018), pp.~2537 -- 2562.

\bibitem{BSWW19}
{\sc D.~Bl{\"o}mker, C.~Schillings, P.~Wacker, and S.~Weissmann}, {\em Well
  posedness and convergence analysis of the ensemble kalman inversion}, Inverse
  Problems, 35 (2019), p.~085007.

\bibitem{BV04}
{\sc S.~Boyd and L.~Vandenberghe}, {\em Convex Optimization}, Cambridge
  University Press, New York, NY, USA, 2004.

\bibitem{CIRS17}
{\sc N.~K. Chada, M.~A. Iglesias, L.~Roininen, and A.~M. Stuart}, {\em
  Parameterizations for ensemble {K}alman inversion}, Inverse Problems, 34
  (2018), p.~055009.

\bibitem{CLTZ16}
{\sc X.~{Chen}, J.~D. {Lee}, X.~T. {Tong}, and Y.~{Zhang}}, {\em {Statistical
  Inference for Model Parameters in Stochastic Gradient Descent}}, arXiv
  e-prints,  (2016), p.~arXiv:1610.08637.

\bibitem{ABSS18}
{\sc E.~E.~S. D.~Albers, P-A.~Blacquart and A.~M. Stuart}, {\em Ensemble
  {K}alman methods with constraints}, In preparation,  (2019).

\bibitem{EHNR96}
{\sc H.~Engl, M.~Hanke, and A.~Neubauer}, {\em Regularization of Inverse
  Problems}, Mathematics and Its Applications, Springer Netherlands, 2000.

\bibitem{GE03}
{\sc G.~Evensen}, {\em The ensemble {K}alman filter: theoretical formulation
  and practical implementation}, Ocean Dynamics, 53 (2003), pp.~343--367.

\bibitem{GE09}
{\sc G.~Evensen}, {\em Data Assimilation: The Ensemble {K}alman Filter}, Earth
  and Environmental Science, Springer Berlin Heidelberg, 2009.

\bibitem{MAI16}
{\sc M.~A. Iglesias}, {\em A regularizing iterative ensemble {K}alman method
  for {PDE}-constrained inverse problems}, Inverse Problems, 32 (2016),
  p.~025002.

\bibitem{ILS13}
{\sc M.~A. Iglesias, K.~Law, and A.~M. Stuart}, {\em Ensemble {K}alman methods
  for inverse problems}, Inverse Problems, 29 (2013), p.~045001.

\bibitem{KS04}
{\sc J.~Kaipio and E.~Somersalo}, {\em Statistical and Computational Inverse
  Problems}, Applied Mathematical Sciences, Springer New York, 2006.

\bibitem{KIF08}
{\sc R.~Kandepu, L.~Imsland, and B.~A. Foss}, {\em Constrained state estimation
  using the unscented {K}alman filter}, 16th Mediterranean Conference on
  Control and Automation,  (2008).

\bibitem{KLS14}
{\sc D.~Kelly, K.~Law, and A.~M. Stuart}, {\em Well-posedness and accuracy of
  the ensemble {K}alman filter in discrete and continuous time}, Nonlinearity,
  27 (2014), p.~2579.

\bibitem{TMK16}
{\sc D.~Kelly, A.~J. Majda, and X.~T. Tong}, {\em Concrete ensemble {K}alman
  filters with rigorous catastrophic filter divergence}, Proceedings of the
  National Academy of Sciences,  (2015).

\bibitem{KM15}
{\sc E.~Kwiatkowski and J.~Mandel}, {\em Convergence of the square root
  ensemble {{K}alman} filter in the large ensemble limit}, SIAM/ASA Journal on
  Uncertainty Quantification, 3 (2015), pp.~1--17.

\bibitem{LSZ15}
{\sc K.~Law, A.~M. Stuart, and K.~Zygalakis}, {\em Data Assimilation: A
  Mathematical Introduction}, Texts in Applied Mathematics, Springer
  International Publishing, 2016.

\bibitem{GMT11}
{\sc F.~Le~Gland, V.~Monbet, and V.-D. Tran}, {\em {Large sample asymptotics
  for the ensemble {{{{K}alman}}} filter}}, Research Report RR-7014, {INRIA},
  2009.

\bibitem{LR09}
{\sc G.~Li and A.~C. Reynolds}, {\em Iterative ensemble {K}alman filters for
  data assimilation}, Society of Petroleum Engineers,  (2009).

\bibitem{LDN08}
{\sc D.~M. Livings, S.~L. Dance, and N.~K. Nichols}, {\em Unbiased ensemble
  square root filters}, Physica D: Nonlinear Phenomena, 237 (2008), pp.~1021 --
  1028.

\bibitem{HKR18}
{\sc B.~K. P.~Hungerl{a}nder and F.~Rendl}, {\em Regularization of inverse
  problems via box constrained minimization}, ArXiv preprint arXiv:1807.11316,
  (2018).

\bibitem{SR19}
{\sc S.~Reich}, {\em Data assimilation}, Acta Numerica, in preparation,
  (2019).

\bibitem{RC14}
{\sc S.~Reich and C.~J. Cotter}, {\em Probabilistic Forecasting and Bayesian
  Data Assimilation}, 2014.

\bibitem{SS17}
{\sc C.~Schillings and A.~M. Stuart}, {\em Analysis of the ensemble {K}alman
  filter for inverse problems}, SIAM Journal on Numerical Analysis, 55 (2017),
  pp.~1264--1290.

\bibitem{SKS11}
{\sc M.~Schmidt, D.~Kim, and S.~Sra}, {\em Projected newton-type methods in
  machine learning},  (2013).

\bibitem{SS09}
{\sc V.~Shikhman and O.~Stein}, {\em Constrained optimization: Projected
  gradient flows}, Journal of Optimization Theory and Applications, 140 (2009),
  pp.~117--130.

\bibitem{DS09}
{\sc D.~Simon}, {\em {K}alman filtering with state constraints: a survey of
  linear and nonlinear algorithms}, 4 (2009), pp.~1303--1318.

\bibitem{AMS10}
{\sc A.~M. Stuart}, {\em Inverse problems: A bayesian perspective}, Acta
  Numerica, 19 (2010), pp.~451--559.

\bibitem{TAB03}
{\sc M.~K. Tippett, J.~L. Anderson, C.~H. Bishop, T.~M. Hamill, and J.~S.
  Whitaker}, {\em Ensemble square root filters}, Monthly Weather Review, 131
  (2003), pp.~1485--1490.

\bibitem{WCC09}
{\sc D.~Wang, Y.~Chen, and X.~Cai}, {\em State and parameter estimation of
  hydrologic models using the constrained ensemble {K}alman filter}, Monthly
  Resources Research, 45 (2009).

\end{thebibliography}

\end{document}